\DeclareMathOperator{\End}{End}
\DeclareMathOperator{\Aut}{Aut}
\DeclareMathOperator{\Inf}{Inf}
\DeclareMathOperator{\Res}{Res}
\DeclareMathOperator{\Ind}{Ind}
\DeclareMathOperator{\Syl}{Syl}    
\newcommand{\Irr}{{\operatorname{Irr}}}
\newcommand{\PSL}{{\operatorname{L}}}
\newtheorem{thm}{Theorem}[section]
\newtheorem{cor}[thm]{Corollary}
\newtheorem{lem}[thm]{Lemma}
\newtheorem{prop}[thm]{Proposition}
\theoremstyle{definition}
\newtheorem{hypo}[thm]{Hypothesis}
\newtheorem{rem}[thm]{Remark}
\newtheorem{ex}[thm]{Example}
\theoremstyle{remark}
\newcommand{\IZ}{\mathbb{Z}}
\newcommand{\fA}{{\mathfrak{A}}}
\newcommand{\fS}{{\mathfrak{S}}}
\let\lra=\longrightarrow
\newcommand{\modcat}{\mathsf{mod}}    
\begin{document}

\title{Endo-trivial modules for finite groups 
\\
with Klein-four Sylow 2-subgroups}

\author{Shigeo Koshitani and Caroline Lassueur}
\address{Department of Mathematics, Graduate School of Science,
Chiba University, 1-33 Yayoi-cho, Inage-ku, Chiba, 263-8522, Japan.}
\email{koshitan@math.s.chiba-u.ac.jp}
\address{FB Mathematik, TU Kaiserslautern, Postfach 3049,
         67653 Kaisers\-lautern, Germany.}
\email{lassueur@mathematik.uni-kl.de}

\thanks{}
\subjclass[2000]{20C20, 20C05}
\keywords{Endo-trivial module, simple module, Green correspondence, 
trivial source module, Auslander-Reiten quiver}

\begin{abstract}
We study the finitely 
generated abelian group $T(G)$ of
endo-trivial $kG$-modules where $kG$ is the group algebra 
of a finite group $G$ over a field of characteristic $p>0$. 
When the representation type of the group algebra is not wild, 
the group structure of  $T(G)$ is known for the
cases where a 
Sylow $p$-subgroup $P$ of $G$ is cyclic, 
semi-dihedral and generalized quaternion.
We investigate $T(G)$, and more accurately, 
its torsion subgroup $TT(G)$ for the case where $P$ is a Klein-four group.
More precisely, we give a necessary and 
sufficient condition
in terms of the centralizers of involutions
under which $TT(G) = f^{-1}(X(N_{G}(P)))$ 
holds, where 
 $f^{-1}(X(N_{G}(P)))$ denotes the  
abelian group consisting of the $kG$-Green correspondents 
of the one-dimensional $kN_{G}(P)$-modules.
We show that the lift to characteristic zero of any indecomposable 
module in $TT(G)$ affords an irreducible ordinary character.
Furthermore, we show that the property of a module in $f^{-1}(X(N_{G}(P)))$ 
of being endo-trivial is not intrinsic to the module itself but is decided 
at the level of the block to which it belongs.
   \end{abstract}

\maketitle

\section{Introduction}

In the representation theory of finite groups,
endo-trivial modules have seen considerable interest in the last 
fifteen years.
This lead to a complete classification for 
finite $p$-groups where $p$ is a prime, 
as well as to  the determination of the Dade groups 
for all finite $p$-groups (see, for example, 
\cite{Thevenaz2007} and the references therein).
The classification of endo-trivial $kG$-modules is 
in general achieved via the description of the structure of 
the finitely generated group $T(G)$, called \emph{the group 
of endo-trivial modules of the finite group $G$},
where $k$ is an algebraically closed
field of characteristic $p$.
Let us recall that by  definition, 
a finitely generated $kG$-module $M$ is called \emph{endo-trivial}
if there is an isomorphism
$$
\End_{k}(M)\cong k_G\oplus \text{(proj)}
$$
of $kG$-modules, 
where $k_G$ is the trivial $kG$-module and 
$\text{(proj)}$ denotes a projective $kG$-module.
The structure of the group 
$T(G)$ has been determined for some classes 
of general finite groups 
(see \cite{Carlson2012, CarlsonMazzaNakano2006, 
MazzaThevenaz2007, CarlsonHemmerMazza2010, 
CarlsonMazzaThevenaz2011,
CarlsonMazzaThevenaz2013, LassueurMalleSchulte2013}, 
for example),
but  no general solution to this problem is known.

In particular the structure of $T(G)$ is known 
when the Sylow $p$-subgroups of $G$ are cyclic 
\cite{MazzaThevenaz2007}, semi-dihedral 
and generalized quaternion 
\cite{CarlsonMazzaThevenaz2013}.  
Therefore if the representation type is not wild, 
it only remains to treat the case of dihedral 
Sylow $2$-subgroups. 
In fact in this case the torsion-free rank of $T(G)$ is known by 
\cite[Theorem 3.1 and Corollary 3.2]{CarlsonMazzaNakano2009}, 
so that the main problem remaining is to describe 
the structure of the torsion subgroup $TT(G)$ of $T(G)$.

The structure of $TT(G)$ may be of different 
nature in case the Sylow $2$-subgroups of $G$ are isomorphic to  the Klein-four group 
(that is, $kG$ has {\it domestic} representation type) 
and in case the Sylow $2$-subgroups are dihedral of order 
at least $8$ (namely, $kG$ is of {\it tame}, but not domestic, representation type),
see \cite[Theorem 4.4.4]{Benson1998}.
In this note, we treat the former case.

In \cite[Theorem A]{Carlson2012}, Carlson states that in the known cases, 
but for few exceptions, $TT(G)=X(G)$, where $X(G)$ is 
the subgroup of $T(G)$ consisting of all one-dimensional $kG$-modules,
see \cite[Section 2]{MazzaThevenaz2007}.  
We shall show that groups with the Klein-four Sylow $2$-subgroup 
are exceptions in the sense of Carlson.\par 
In fact, if $P\in{\mathrm{Syl}}_2(G)$ is 
such that $P\cong C_{2}\times C_{2}$, 
then the indecomposable torsion endo-trivial 
modules coincide with 
the trivial source endo-trivial modules, or in other words the  
$kG$-Green correspondents  of  the one-dimensional 
$kN_{G}(P)$-modules which are at the same 
time endo-trivial, that is to say 
$TT(G)=T(G)\cap f^{-1}(X(N_{G}(P)))$, 
where $f$ is the Green correspondence with respect
to $(G,P,N)$. We obtain a necessary and sufficient condition
in terms of the centralizers of involutions
under which $TT(G) = f^{-1}(X(N_{G}(P)))$ occurs.
Namely, one of our main results of this paper is the following.

\begin{thm}\label{C2xC2new}
Let $p = 2$, and let $G$  be a finite group with 
a Klein-four Sylow $2$-subgroup $P \cong C_2\times C_2$. 
Let $(K,\mathcal{O},k)$ be a splitting 
$2$-modular system for $G$ and its subgroups. 
Further set $N := N_G(P)$, and let $f$ be the Green correspondence 
with respect to $(G,P,N)$. 
Now, suppose that $1a$ is any one-dimensional $kN$-module, 
and $B$ is the $2$-block of $G$ to which $f^{-1}(1a)$, 
the $kG$-Green correspondent of $1a$, belongs. Then:
\begin{enumerate}
  \item[\rm(a)]  
The $kG$-module $f^{-1}(1a)$ uniquely lifts 
to an $\mathcal{O}G$-lattice 
affording an irreducible character in \smallskip $\Irr(B)$.
  \item[\rm(b)]
Further, $f^{-1}(1a)$ is an endo-trivial $kG$-module 
if and only if it holds that
$B$ is of principal type and that 
$\phi_u(1) = 1$ for any element 
$u\in P$ with $u \not= 1$, 
where $\phi_u$ is the unique irreducible Brauer character 
of a $2$-block $B_u$ of $C_G(u)$ such that ${B_u}^G = B$ 
(notice that such a $B_u$ is 
unique since $B$ is of principal type and $B_u$ is nilpotent). 
  \item[\rm(c)]  
In particular, if $f^{-1}(1a)$ is endo-trivial 
then all the $kG$-Green correspondents of 
one-dimensional $kN$-modules which lie in $B$ are endo-trivial.
\end{enumerate}  
\end{thm}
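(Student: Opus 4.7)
The plan is to combine three main ingredients: Broué's lifting theorem for $p$-permutation modules, the Brauer construction and its compatibility with tensor products of trivial source modules, and the source-algebra classification of blocks with Klein-four defect groups (Erdmann--Linckelmann).

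First, for part (a), any one-dimensional $kN$-module $1a$ is a trivial source module with vertex $P$, and the Green correspondence preserves trivial source and vertex; hence $M := f^{-1}(1a)$ is an indecomposable trivial source $kG$-module in $B$ with vertex $P$. By Broué's lifting theorem, $M$ admits a unique lift $\widetilde{M}$ to an $\mathcal{O}G$-lattice. To show that the character $\chi := \chi_{\widetilde{M}}$ is irreducible, I would invoke the fact that $B$ is source-algebra equivalent to $kP$, $kA_4$, or $B_0(kA_5)$. In each of these three cases the indecomposable trivial source modules with vertex $P$ are easily enumerated, and direct inspection (carried out explicitly in $A_5$ using the structure of $k[A_5/P]$) shows that their lifts afford irreducible characters; the source-algebra equivalence then transports this conclusion back to $B$.

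Second, for part (b), I would exploit the following criterion: a trivial source $kG$-module $M$ with vertex $P$ is endo-trivial if and only if $M(u)$ is a one-dimensional $k[C_G(u)/\langle u \rangle]$-module for every non-trivial $u\in P$. This follows from the tensor-product formula $(M \otimes M^*)(u) \cong M(u)\otimes M(u)^*$ applied to $M\otimes M^*\cong k\oplus\text{(proj)}$, together with the fact that the only indecomposable $p$-permutation summands with vanishing Brauer construction at every non-trivial $p$-subgroup are the projective ones. Now the Brauer correspondence for trivial source modules forces $M(u)$ to lie in the sum of blocks $b$ of $C_G(u)$ with $b^G = B$; this sum collapses to a single block $b_u$ (a Brauer correspondent of $B$ in $C_G(u)$) exactly when $B$ is of principal type. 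Under this hypothesis the inertial quotient of $b_u$ is a $2'$-subgroup of the stabilizer of $u$ in $\Aut(P)=S_3$, hence trivial, so $b_u$ is nilpotent by Broué--Puig and has a unique irreducible Brauer character $\phi_u$. A dimension count tracking the Brauer correspondent then gives $\dim_k M(u) = \phi_u(1)$, whence the stated characterization.

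Finally, part (c) is immediate once (b) is in hand, since the conditions ``$B$ of principal type'' and ``$\phi_u(1)=1$ for every non-trivial $u\in P$'' depend only on $B$, and not on the specific one-dimensional $kN$-module $1a$. The main obstacle I foresee is the precise identification $\dim_k M(u) = \phi_u(1)$ in (b): it requires a careful bookkeeping through the Brauer correspondence for trivial source modules and a clean use of the principal-type hypothesis to ensure that $b_u$ is singled out uniquely. A secondary technical point is the case-by-case verification in (a), where one must check irreducibility of the afforded character in each of the three source-algebra classes of Klein-four blocks.
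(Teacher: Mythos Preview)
Your treatment of part~(a) is essentially the paper's: both invoke the Craven--Eaton--Kessar--Linckelmann source-algebra classification, check irreducibility of the lifted character in each of the three model blocks $\mathcal{O}P$, $\mathcal{O}\fA_4$, $B_0(\mathcal{O}\fA_5)$, and transport back via the Puig equivalence.

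For part~(b) your route diverges from the paper's. The paper works entirely on the character side: it uses the criterion that a trivial-source module $V$ of $p'$-dimension is endo-trivial iff $\chi_{\hat V}(u)=1$ for every non-trivial $2$-element $u$ (Theorem~\ref{prop:torchar}), and evaluates $\chi(u)$ via Brauer's Second Main Theorem as
\[
\chi(u)=\sum_{i=1}^{n} d^{u}_{\chi,\phi_u^{(i)}}\,\phi_u^{(i)}(1),
\]
the sum running over \emph{all} blocks $B_u^{(i)}$ of $C_G(u)$ with $(B_u^{(i)})^G=B$. The crucial input is that every $d^{u}_{\chi,\phi_u^{(i)}}=1$; this is obtained by checking it in the three model blocks (Lemma~\ref{GenDecNumber}) and then using that Puig equivalences preserve generalized decomposition numbers. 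From $\chi(u)=\sum_i \phi_u^{(i)}(1)$ one reads off immediately that $\chi(u)=1$ forces $n=1$ (principal type) and $\phi_u(1)=1$. Your Brauer-construction criterion $\dim_k M(\langle u\rangle)=1$ is equivalent to $\chi(u)=1$ (this is the standard identity $\chi_{\hat M}(u)=\dim_k M(\langle u\rangle)$ for trivial-source modules), so the two criteria agree; what differs is how the quantity is computed.

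There is, however, a genuine gap in your ``only if'' direction. From $\dim_k M(\langle u\rangle)=1$ you can conclude that $M(\langle u\rangle)$ sits in a single block $B_u^{(i_0)}$ with $\phi_u^{(i_0)}(1)=1$, but nothing in your outline shows that $B_u^{(i_0)}$ is the \emph{only} block of $C_G(u)$ inducing $B$. Equivalently, you have not argued that $M(\langle u\rangle)$ has a non-zero component in every such block. That statement is exactly $d^{u}_{\chi,\phi_u^{(i)}}\neq 0$ for all $i$, and the paper secures it (with value $1$) through the Puig equivalence. Your ``dimension count tracking the Brauer correspondent'' would have to reproduce this; the natural way to do so again passes through generalized decomposition numbers and their invariance under source-algebra equivalence, so the key technical input ends up being the same as the paper's even though the packaging is different.

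Part~(c) is, as you say, immediate from~(b) in either approach.
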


We emphasize that 
the endo-trivial modules $f^{-1}(1a)$ 
afford {\it irreducible} ordinary characters,
and also that 
{\it endo-triviality} in this case depends only on
the block, see Theorem \ref{C2xC2new}(a) and (c). 
From the previously known cases, see e.g. \cite{CarlsonHemmerMazza2010,CarlsonMazzaNakano2006,CarlsonMazzaNakano2009,CarlsonMazzaThevenaz2011,CarlsonMazzaThevenaz2013,MazzaThevenaz2007}, 
this does  not seem to occur in general.

\begin{rem}\label{CounterExamples}
Keep the notation as in Theorem \ref{C2xC2new}.
There are finite groups $G$, both in the  case that 
$|N_G(P):C_G(P)|= 1$ (namely $G$ is $2$-nilpotent) and in the case that
$|N_G(P):C_G(P)|= 3$, which satisfy the condition 
that there exists an
indecomposable trivial source $kG$-module $f^{-1}(1a)$ which
is {\it not} endo-trivial.
In particular, we give an example where
$X(G)\cong\{0\}$ and $TT(G)\cong\IZ/3\IZ$ while
$X(N_G(P))\cong\IZ/3\IZ\oplus\IZ/3\IZ\oplus\IZ/3\IZ$.
See 
Examples \ref{2CoreM(3)} and \ref{CounterEx}.
\end{rem}

We obtain sufficient conditions
under which $f^{-1}(1a)$ in Theorem~\ref{C2xC2new} 
is endo-trivial.

\begin{cor}\label{etSuffCond}
Keep the notation as in Theorem {\rm{\ref{C2xC2new}}}. 

Let $u$ be any element of $P$ with $u \not= 1$.
   \begin{enumerate}
   \item[\rm (a)]
If \smallskip $f^{-1}(1a)$ belongs to a $2$-block of $G$ containing 
a one-dimensional $kG$-module, then $f^{-1}(1a)$ is
endo-trivial. 
In particular, if $1a$ is in the principal block of $N$,
then $f^{-1}(1a)$ is endo-trivial.
    \item[\rm (b)]
If \smallskip $O_{2'}(C_G(u))$ is abelian, then $f^{-1}(1a)$ is 
endo-trivial.
  \end{enumerate}
\end{cor}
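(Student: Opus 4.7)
Both parts are proved by verifying the characterization of endo-triviality given in Theorem~\ref{C2xC2new}, but by different routes.

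For (a), let $\chi$ be a one-dimensional $kG$-module lying in $B$. Such a $\chi$ is trivially endo-trivial, and since it has vertex $P$, Green correspondence gives $\chi = f^{-1}(\chi\resgh{G}{N})$, where $\chi\resgh{G}{N}$ is a one-dimensional $kN$-module lying in the Brauer correspondent of $B$ in $N$. Theorem~\ref{C2xC2new}(c) then forces every $kG$-Green correspondent of a one-dimensional $kN$-module lying in $B$ to be endo-trivial, so in particular $f^{-1}(1a)$ is endo-trivial. For the second assertion, if $1a$ lies in the principal block of $N$, then $f^{-1}(1a)$ lies in the principal block of $G$ (Brauer correspondence preserves principal blocks), which contains the trivial module $k_G$; the first assertion then applies.

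For (b), the plan is to check the two conditions of Theorem~\ref{C2xC2new}(b). First, $H:=C_G(u)$ is $2$-nilpotent: since $u\in Z(H)$ and $P$ is an abelian Sylow $2$-subgroup of $H$ containing $u$, the quotient $H/\langle u\rangle$ has cyclic Sylow $2$-subgroup of order $2$, so Burnside's $p$-complement theorem, applied iteratively and combined with the triviality of $O_{2'}(H/O_{2'}(H))$, yields $H/O_{2'}(H) \cong P$. Setting $N_1:=O_{2'}(H)$, which is abelian by hypothesis, we have $H=N_1\rtimes P$. The inertial quotient $E(B_u)$ is the stabilizer in $E(B)\leq\Aut(P)\cong \mathfrak{S}_3$ of the element $u$; since $E(B)$ is a $2'$-subgroup it is contained in the subgroup of order $3$, which permutes the three nontrivial elements of $P$ in a single $3$-cycle, so this stabilizer is trivial and $B_u$ is nilpotent. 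Thus $B_u$ corresponds to a $P$-fixed character $\theta\in\Irr(N_1)$, which is linear as $N_1$ is abelian and extends by $\widetilde\theta(nv):=\theta(n)$ ($n\in N_1,\,v\in P$) to a linear character $\widetilde\theta\colon H\to k^\times$. This $\widetilde\theta$ is the unique simple $kH$-module in $B_u$, so $\phi_u(1)=\widetilde\theta(1)=1$, verifying the second condition of Theorem~\ref{C2xC2new}(b).

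It remains to show that $B$ is of principal type, i.e.\ that $\Br_u(e_B)$ is a primitive idempotent of $Z(kC_G(u))$. The plan is to prove that the $B$-Brauer pair $(\langle u\rangle, e_{B_u})$ is completely determined by the $P$-fixed character $\theta\in\Irr(N_1)$ describing $B_u$, so that $\Br_u(e_B)$ collapses to the single block idempotent $e_{B_u}$. This is the main obstacle of the proof: under the abelian-$O_{2'}$ hypothesis one must rule out the coexistence of two distinct $P$-fixed characters $\theta_1,\theta_2\in\Irr(N_1)$ whose corresponding nilpotent blocks of $C_G(u)$ both Brauer-induce to $B$, which requires tracking how $G$-conjugation on Brauer pairs interacts with the abelian structure of $N_1$ and the $2$-nilpotent structure of $H$ established above. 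Once this principal-type condition is in place, Theorem~\ref{C2xC2new}(b) delivers the desired endo-triviality of $f^{-1}(1a)$.
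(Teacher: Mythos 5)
Part (a) of your proposal is correct and is essentially the paper's own argument: the one-dimensional module $V'$ in $B$ restricts to its Green correspondent $1b$ in $N$, and Theorem~\ref{C2xC2new}(c) (endo-triviality is a block property) transfers endo-triviality to $f^{-1}(1a)$; the principal-block case follows as you say.

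For (b), however, your proposal does not prove the statement. Your own strategy requires both conditions of Theorem~\ref{C2xC2new}(b), and while the computation $\phi_u(1)=1$ is in substance what the paper does (the paper simply quotes Morita's theorem for the $2$-nilpotent group $C_G(u)$: $kB_u\cong \mathrm{Mat}_{\theta(1)}(kP)$ with $\theta\in\Irr(O_{2'}(C_G(u)))$, so abelianness forces $\theta(1)=1$), the principal-type condition is left as an unproven ``plan'' which you yourself call the main obstacle. As it stands the appeal to Theorem~\ref{C2xC2new}(b) is therefore not justified. Moreover, the plan is aimed at the wrong mechanism: the abelian structure of $N_1=O_{2'}(C_G(u))$ is not what makes $B$ of principal type. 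The decisive input is the standing hypothesis that the Brauer correspondent $b$ of $B$ in $N$ contains the one-dimensional module $1a$: by Lemma~\ref{princ_type} (Fong--Reynolds) this forces $N_G(P,e)=N_G(P)$ for a maximal $B$-Brauer pair $(P,e)$, and since $P$ is abelian, fusion of $B$-subpairs inside $(P,e)$ is controlled by $N_G(P,e)$ (Alperin--Brou\'e) while $G$-fusion of elements of $P$ is controlled by $N_G(P)$ (Burnside); combining these, for each involution $u$ there is exactly one block of $C_G(u)$ inducing to $B$, i.e.\ $\mathrm{Br}_Q(1_B)$ is a single block idempotent for every $Q\leq P$. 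Some argument of this kind (or an equivalent one) is needed to close your gap; the paper's printed proof of (b) consists precisely of the $\phi_u(1)=1$ computation followed by the appeal to Theorem~\ref{C2xC2new}(b), so this is the step your proposal identifies correctly but does not supply.

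A smaller misstep in the same part: you deduce that $\theta$ is $P$-fixed from the nilpotency of $B_u$. Nilpotency does not give this --- a $P$-orbit of length $2$ in $\Irr(N_1)$ also yields a nilpotent block, but then the unique simple module has dimension $|P:P_\theta|\,\theta(1)=2$, not $1$. What gives $P$-invariance is that $B_u$ has defect group $P$ (as the paper asserts: any block of $C_G(u)$ inducing to $B$ lies under a maximal $B$-pair, and $C_P(u)=P$ forces its defect group to be all of $P$); full defect, not nilpotency, is the fact you should invoke before extending $\theta$ to $H$.
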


In terms of the structure of the group $T(G)$ of 
endo-trivial module, we have the following result, which
is also one of our main results:

\begin{thm}\label{structureT(G)}
Let $p = 2$, and let $G$ be a finite group with
the Klein-four Sylow $2$-subgroup $P \cong C_2\times C_2$.
Assume further that $|N_G(P):C_G(P)|=3$. 
Then the following \smallskip holds:
\begin{enumerate}
  \item[\rm(a)]  
The group $T(\fA_{4}) \cong \IZ/3\IZ\oplus \IZ$ embeds in $T(G)$
via inflation from $N_{G}(P)/O_{2'}(N_{G}(P))\cong \fA_{4}$ to 
$N_G(P)$ followed by the Green correspondence with respect to $(G,P,N_G(P))$.
  \item[\rm(b)] 
If $P$ is self-centralizing, that is  $C_G(P) = P$, 
then \smallskip $T(G) \cong \IZ/3\IZ\oplus \IZ$.
  \item[\rm(c)] If $u\in P$ is non-trivial and $O_{2'}(C_G(u))$ 
is abelian then $T(G)\cong f^{-1}(X(N_{G}(P)))\oplus \IZ$.
    \item[\rm(d)] 
If $G= H \times G_0$, where $H$ is a $2'$-group and 
$G_{0}$ is almost simple, that is 
$G_{0}=\PSL_{2}(q)\rtimes C_{n}$ 
with $q\equiv\pm 3 \pmod{8}$ and $C_{n}$ 
is the cyclic group of odd order $n$, then it holds
that $T(G)\cong f^{-1}(X(N_{G}(P)))\oplus \IZ$ 
and that $f^{-1}(X(N_{G}(P)))\cong
\IZ/3\IZ \oplus \IZ/n\IZ \oplus H/[H,H]$.
\end{enumerate}
\end{thm}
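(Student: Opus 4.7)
Set $N := N_G(P)$. Two observations unify all four parts. First, since $P \cong C_2 \times C_2$ is, up to $G$-conjugacy, the unique maximal elementary abelian $2$-subgroup of $G$, the torsion-free rank of $T(G)$ equals $1$, generated by the class of $\Omega_G(k)$. Second, Theorem \ref{C2xC2new} reduces the computation of $TT(G)$ to deciding which elements of $f^{-1}(X(N))$ are endo-trivial. Hence I handle each part by describing $X(N)$ explicitly and then invoking Corollary \ref{etSuffCond} or Theorem \ref{C2xC2new}(b).

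For (a), the subgroup $O_{2'}(N)$ has odd order, so inflation yields an injection $T(\fA_4) \hookrightarrow T(N)$. Composing with $f^{-1}$ gives an injective homomorphism $T(\fA_4) \to T(G)$: the three linear characters of $\fA_4$ lie in the principal $2$-block, so their inflations land in the principal block of $N$, and Corollary \ref{etSuffCond}(a) identifies their Green correspondents as endo-trivial; meanwhile $f^{-1}(\Inf\Omega_{\fA_4}(k)) \equiv \Omega_G(k)$ modulo projectives since Green correspondence commutes with $\Omega$. Multiplicativity is inherited from the compatibility of $f^{-1}$ with tensor products of trivial source modules. Part (b) is an immediate specialization: $C_G(P) = P$ forces $N \cong \fA_4$, and $X(N) \cong \IZ/3\IZ$ accounts for all of $TT(G)$, so $T(G) \cong \IZ/3\IZ \oplus \IZ$. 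Part (c) follows directly from Corollary \ref{etSuffCond}(b), which certifies every element of $f^{-1}(X(N))$ as endo-trivial.

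For part (d), I first compute $N$: because $|H|$ is odd, $P \leq G_0$ and in fact $P \leq \PSL_2(q)$, with $N_{\PSL_2(q)}(P) \cong \fA_4$ classical for $q \equiv \pm 3 \pmod 8$. The odd-order subgroup $C_n$ acts trivially on $P$, because the injection $N_{G_0}(P)/C_{G_0}(P) \hookrightarrow \Aut(P) \cong \fS_3$ contains the $C_3$ of $\fA_4$ and has odd order, forcing the image to be exactly $C_3$. Hence $N = H \times \fA_4 \times C_n$ and $X(N) \cong H/[H,H] \oplus \IZ/3\IZ \oplus \IZ/n\IZ$. To see that every $f^{-1}(1a)$ is endo-trivial, I verify Theorem \ref{C2xC2new}(b). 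Write the block $B$ of $f^{-1}(1a)$ as $b \otimes B_0$ with $b$ a defect-zero block of $kH$ and $B_0$ a block of $kG_0$; since every linear character of $G_0 = \PSL_2(q) \rtimes C_n$ factors through $C_n$ and hence restricts trivially to $P$, such a character lies in the principal block of $G_0$, so $B_0$ is the principal block. Consequently $B$ is of principal type, and its Brauer correspondent $B_u$ in $C_G(u) = H \times C_{G_0}(u)$ is $b$ tensored with the principal block of $C_{G_0}(u)$. The inertial quotient of $B_u$ at $P$ embeds into $N_G(P)/(P \cdot C_G(P)) \cong C_3$ and must fix $u \in P$; since $C_3$ acts freely on the set of non-identity elements of $P$, this inertial quotient is trivial, so $B_u$ is nilpotent. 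As $B_u$ is a principal block of $C_G(u)$ with a unique simple module, that simple is necessarily the trivial module, giving $\phi_u(1)=1$. Theorem \ref{C2xC2new}(b) then yields $TT(G) = f^{-1}(X(N))$ and, combined with the torsion-free rank, the claimed decomposition.

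The principal obstacle is the block-theoretic analysis in part (d): verifying that the relevant block $B_u$ of $C_G(u)$ is nilpotent via the inertial-quotient computation and that its unique Brauer character has degree one. Once these are settled, the remainder of the argument reduces to the rank-one torsion-free structure of $T(G)$ and repeated applications of Theorem \ref{C2xC2new} and Corollary \ref{etSuffCond}.
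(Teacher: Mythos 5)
Your parts (a), (b) and (c) follow the paper's own route: the injectivity statements from Lemma~\ref{lem:basicET}(e), endo-triviality of the three torsion classes via Corollary~\ref{etSuffCond}(a), the identification $TT(G)=f^{-1}(X(N))\cap T(G)$ from Lemma~\ref{lem:TGKlein}(b), and Corollary~\ref{etSuffCond}(b) for (c); these are fine.

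Part (d) contains a genuine gap. You write the block of $f^{-1}(1a)$ as $b\otimes B_{0}$ and claim that $B_{0}$ must be the principal block of $G_{0}$, justified by the assertion that every linear character of $G_{0}=\PSL_{2}(q)\rtimes C_{n}$ restricts trivially to $P$ and ``hence lies in the principal block''. Triviality on a Sylow $2$-subgroup does not place a character in the principal block, and the assertion is false for $n>1$: by Harris--Kn\"orr applied to $N_{G_{0}}(P)\cong \fA_{4}\times C_{n}$ (whose $kC_n$-factor splits into $n$ blocks), $G_{0}$ has exactly $n$ blocks of full defect covering $B_{0}(\PSL_{2}(q))$, and a nontrivial linear character $\mu$ factoring through $C_{n}$ lies in a \emph{non-principal} one (its central character differs from $\omega_{1}$ on an odd-sized class of a generator of $C_{n}$). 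Concretely, if $1a=\Res^{G}_{N}(\mu)$ then $f^{-1}(1a)=\mu$ itself, and its block is not of the form you claim. Everything downstream (that $B_{u}$ is the \emph{principal} block of $C_{G}(u)$, hence $\phi_{u}=1_{C_G(u)}$) is built on this false premise, so Theorem~\ref{C2xC2new}(b) is not verified for the $n-1$ non-principal full-defect blocks that actually occur. Also, your claim $N=H\times\fA_{4}\times C_{n}$ only follows from $C_{n}$ centralizing $P$, which you prove, not all of $\fA_{4}$; the paper is content with $N\cong(\fA_{4}\rtimes C_{n})\times H$, which suffices for $X(N)$ since an odd-order group acts trivially on $(\fA_{4})^{\mathrm{ab}}\cong C_{3}$.

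The gap is repairable, and the paper's own proof of (d) shows how to avoid the block analysis altogether: every one-dimensional $kG$-module is endo-trivial, so $X(G)\cong C_{n}\times H^{\mathrm{ab}}\leq TT(G)$; together with the copy of $\IZ/3\IZ$ from part (a) this already generates a subgroup of $TT(G)$ mapping onto all of $X(N)\cong \IZ/3\IZ\oplus\IZ/n\IZ\oplus H^{\mathrm{ab}}$ under the (injective) restriction to $N$, whence $TT(G)=f^{-1}(X(N))$ by Lemma~\ref{lem:TGKlein}(b). Alternatively, within your framework, observe that each full-defect block of $G_{0}$ covering $B_{0}(\PSL_{2}(q))$ is the twist of the principal block by a linear character, hence contains a one-dimensional $kG_0$-module, so each relevant block of $G$ contains a one-dimensional $kG$-module and Corollary~\ref{etSuffCond}(a) applies; but some such argument must be supplied.
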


\begin{rem}
Keep the notation as in Theorem \ref{structureT(G)}.
  \begin{enumerate}
\item[\rm (a)]
We note that a finite group $G$ 
satisfying the assumptions of Theorem~\ref{structureT(G)}(b) 
can be written as $G/O_{2'}(G)\cong \PSL_{2}(q)$ 
with $q\equiv \pm 3 \pmod{8}$.  
The groups $\PSL_{2}(q)$ with $q \equiv \pm 3 \pmod{8}$ and 
$q>3$ are in fact the only finite simple groups with a Klein-four Sylow 
$2$-subgroup $C_2\times C_2$,
see \cite{Bender1970} and \cite{Walter1969}.
\item[\rm (b)]
Also note that the case $|N_{G}(P):C_{G}(P)|=1$ is excluded 
from the above Theorem~\ref{structureT(G)},
as it is already treated in \cite{CarlsonMazzaThevenaz2011} and 
\cite{NavarroRobinson2012}. Actually in this case $T(G)\cong X(G)\oplus\IZ$.
   \end{enumerate}
\end{rem}

This paper is divided into five sections. In \S 2 we introduce
notations we shall use and fundamental general facts, and
also preliminaries on endo-trivial modules.
\S 3 contains known results about endo-trivial modules for
finite groups with Klein-four Sylow $2$-subgroups.
\S 4 deals with the Green correspondents of one-dimensional
modules over the normalizer of Klein-four Sylow $2$-subgroups.
\S 5 contains complete and detailed proofs of the main results
Theorems \ref{C2xC2new} and \ref{structureT(G)}, and
Corollary \ref{etSuffCond}.

\section{Notations and Preliminaries}\label{sec:pre}

Throught, unless otherwise specified, we always 
let $p$ denote a prime number and $G$ a finite group.
We assume that $(\mathcal K, \mathcal O, k)$ is a
splitting $p$-modular system for all subgroups of $G$, that is, 
$\mathcal O$ is a complete discrete valuation ring of
rank one such that its quotient field $\mathcal K$ is
of characteristic zero, and its residue field
$k=\mathcal O/\mathrm{rad}(\mathcal O)$ is of
characteristic $p$, and that $\mathcal K$ and $k$ are
splitting fields for all subgroups of $G$.
Moreover, modules are always finitely generated,
and $kG{\text-}\modcat$ 
denotes the category of all finitely generated left $kG$-modules.
We say that $\mathfrak M$ is an $\mathcal OG$-lattice if
$\mathfrak M$ is a left $\mathcal OG$-module such that $\mathfrak M$
is a free $\mathcal O$-module of finite rank.
We write $\Syl_{p}(G)$ for the set of all Sylow $p$-subgroups of $G$.
If $P\in\Syl_{p}(G)$, then
we denote by $f$ the Green correspondence 
with respect to $(G, P, N_{G}(P))$, see \cite[p.276]{NagaoTsushima}. 
We write $\Delta G$ for the diagonal subgroup
$\{(g,g)\in G\times G\, | \, g\in G \}$ of $G\times G$.
For a positive integer $n$, we denote by $C_n$ a cyclic group
of order $n$, and by $\mathfrak S_n$ and $\mathfrak A_n$,
respectively, 
the symmetric and alternating groups of degree $n$.
We write $Z(G)$ for the center of $G$, for elements $g, h\in G$
we write $[g,h]:=g^{-1}h^{-1}gh$ and 
$g^h := h^{-1}gh$, and we write $G^{ab}:=G/[G,G]$ 
for the abelianization of $G$.
For two $kG$-modules $M$ and $M'$, 
$M\otimes M'$ means $M\otimes_k M'$.
We denote by $k_G$ or simply by $k$ 
the trivial $kG$-module.
We write $H \leq G$ if $H$ is a subgroup of $G$.
In such a case, for $M\in kG{\text{-}}{\sf mod}$
and $N\in kH{\text{-}}{\sf mod}$, we denote by
${\mathrm{Res}}^G_H(M)$ and ${\mathrm{Ind}}_H^G(N)$,
respectively, the restriction of $M$ to $H$ and
the induced module of $N$ to $G$.
For $N\trianglelefteq G$ and an $\mathcal O(G/N)$-lattice $M$
we write ${\mathrm{Inf}}_{G/N}^G(M)$ for the inflation
of $M$ from $G/N$ to $G$.
For $H, L \leq G$, we write $G = H \rtimes L$
if $G$ is a semi-direct product of $H$ by $L$,
note that $H \trianglelefteq G$.
By a block $B$ of $G$ (or $\mathcal OG$), we mean
a block ideal of $\mathcal OG$. Then, we write
$KB$ and $kB$, respectively, for $K\otimes_{\mathcal O}B$ and
$k\otimes_{\mathcal O}B$.
We denote by $1_B$ the block idempotent of $kB$ or
sometimes of $B$.
We denote by $\mathrm{Irr}(G)$ and $\mathrm{IBr}(G)$ the sets of
all irreducible ordinary and Brauer characters of $G$, respectively.
For a block $B$ of $G$, we also denote  by
$\mathrm{Irr}(B)$ the set of all characters in $\mathrm{Irr}(G)$
which belong to $B$. Similarly for $\mathrm{IBr}(B)$.
We write $1_G$ for the trivial ordinary or Brauer character of $G$.
For an $\mathcal OG$-lattice $\mathfrak M$, we denote
by $K\mathfrak M$ and $k\mathfrak M$, respectively,
$K\otimes_{\mathcal O}\mathfrak M$ and 
$k\otimes_{\mathcal O}\mathfrak M$.

For $Q\leq G$, we define the Brauer homomorphism
${\mathrm{Br}}^G_Q : kG \rightarrow kC_G(Q)$ by
$\sum_{g\in G}\alpha_g g 
\mapsto \sum_{g\in C_G(Q)}\alpha_g g$
where $\alpha_g\in k$.
For a block $B$ of $G$ with defect group $P$
we say that $B$ is of {\it principal type} if, for any 
$Q\leq P$, ${\mathrm{Br}}^G_Q(1_B)$ is a block idempotent of $kC_G(Q)$,
see \cite[Proposition 3.1(iii)]{HarrisLinckelmann2000}.

For an $\mathcal OG$-lattice $\hat L$, we denote by
${\hat L}^\vee$ the $\mathcal O$-dual of $\hat L$, namely,
${\hat L}^\vee = {\mathrm{Hom}}_{\mathcal O}(\hat L, \mathcal O)$.
Let $M$ be a $kG$-module.
We write $M^*$ for the $k$-dual of $M$, namely,
$M^* = \mathrm{Hom}_{kG}(M,k)$.
We say that $M$ is a {\it trivial source} module if it is a
direct sum of indecomposable $kG$-modules all of whose sources are
trivial modules, see \cite[p.218]{Thevenaz}. 
It is known that a trivial source $kG$-module $M$ 
lifts uniquely to a trivial source $\mathcal OG$-lattice, so
denote it by $\hat M$, see 
\cite[Chap.4, Theorem 8.9(iii)]{NagaoTsushima}. 
Then, by $\chi_{\hat M}$ we denote the
ordinary character of $G$ afforded by $\hat M$.
For $\chi\in{\mathrm{Irr}}(G)$ and $\phi_u\in{\mathrm{IBr}}(C_G(u))$
where $u$ is a $p$-element of $G$, we denote by 
$d^u_{\chi,\phi_u}$ the {\it generalized decomposition number}
with respect to $\chi$ and $\phi_u$, see
\cite[p.327]{NagaoTsushima}.
We let $X(G)$ denote the group of one-dimensional $kG$-modules 
endowed with the tensor product $-\otimes_{k}-$, and recall that 
$X(G)\cong (G^{ab})_{p'}$.
For an integer $n\geq 1$ and a ring $R$ we denote by 
${\mathrm{Mat}}_n(R)$ the full matrix ring of $n\times n$-matrices
over $R$.

For a non-negative integer $m$ and a positive integer $n$,
we write $n_p = p^m$ if $p^m | n$ and $p^{m+1}{\not|}n$.
For  other notation and terminology,
we refer to the books \cite{NagaoTsushima} and \cite{Thevenaz}.

\subsection{Endo-trivial $kG$-modules}\label{ssec:endo}

We start by summing up some the basic properties of endo-trivial modules.

\begin{lem}\label{lem:basicET}
Let $G$ be a finite group, let $P\in \Syl_{p}(G)$ and let $H\leq G$ 
be a subgroup.
\begin{enumerate}
 \renewcommand{\labelenumi}{\rm{(\alph{enumi})}}
  \item\label{item:basicET1}
If $M\in kG{\text{-}}\modcat$ is endo-trivial, 
then $M\cong M_{0}\oplus ({\mathrm{proj}})$ where $M_{0}$ is 
indecomposable and endo-trivial.
The relation $M\sim N\Leftrightarrow M_{0}\cong N_{0}$ 
is an equivalence relation on the class of endo-trivial modules. 
We let $T(G)$ denote the resulting set of equivalence classes. 
  \item \label{item:basicETb}
The set $T(G)$, endowed with the law 
$[M]+[N]:=[M\otimes_{k}N]$ induced by the tensor 
product $\otimes_{k}$, is an abelian group called 
the \textup{group of endo-trivial modules of $G$}. 
The zero element is the class $[k]$ and $-[M]=[M^{*}]$, 
the class of the dual module.
     \item\label{item:basicETc}
Any one-dimensional $kG$-module is endo-trivial.  Thus 
identifying $X\in X(G)$ with its class $[X]\in T(G)$, 
we have $X(G)\leq T(G)$.
   \item \label{item:basicETres}\label{item:basicETd}
If $M$ is an endo-trivial $kG$-module, 
then $\Res^{G}_{H}(M) = N\oplus {\mathrm{(proj)}}$, where $N$ is 
an endo-trivial $kH$-module. Moreover, if  $H\geq P$, 
then a $kG$-module $M$ is endo-trivial if and only if 
${\mathrm{Res}}^G_H(M)$ is endo-trivial.
   \item\label{item:basicETe}
If  $p\,|\,|H|$, then the restriction induces 
a group homomorphism 
$\Res^{G}_{H}$$ :T(G)\lra T(H):[M]\mapsto[\Res^{G}_{H}(M)]$. 
Moreover if  $H\geq N_{G}(P)$, then $\Res^{G}_{H}$ is 
injective and if $M$ is an indecomposable endo-trivial 
$kG$-module, then $[\Res^{G}_{H}(M)]=[f_{H}(M)]$ where 
$f_{H}$ denotes the Green correspondence with respect 
to  $(G,P,H)$.
   \item\label{item:basicETf}
The group $T(G)$ is finitely generated, so that we may write 
$T(G)=TT(G)\oplus TF(G)$ where $TT(G)$ is the torsion 
subgroup and $TF(G)$ is a torsion-free subgroup. 
   \item \label{item:basicETg}
If $TT(P)$ is trivial, then $TT(N_{G}(P))\!=\ker{(\Res^{N_{G}(P)}_{P})}\cong X(N_{G}(P))$.
   \item\label{item:basicETh}
 If $M$ is endo-trivial, then 
$\dim_{k}(M)\equiv \pm 1 \pmod{|G|_{p}}$ if 
$p$ is odd; and 
\linebreak
$\dim_{k}(M)\equiv \pm 1 \pmod{\frac{1}{2}|G|_{2}}$ 
if $p=2$. Moreover if $M$ is indecomposable with trivial 
source, then $\dim_{k}(M)\equiv 1 \pmod{|G|_{p}}$. 
   \item\label{item:basicETi}
Endo-trivial $kG$-modules have the Sylow $p$-subgroups 
as their vertices and lie in $p$-blocks 
of full defect.
\end{enumerate}  
\end{lem}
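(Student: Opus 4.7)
The plan is to dispatch each item (a)--(i) individually, since the lemma is a compilation: most parts follow directly from Krull--Schmidt applied to the defining isomorphism $\End_{k}(M) \cong k_{G} \oplus (\mathrm{proj})$, a few reduce to Green correspondence, and one or two are standard results I would simply cite.

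I would start with (a) by writing $M = \bigoplus_{i} M_{i}$ in a Krull--Schmidt decomposition, noting $\End_{k}(M) = \bigoplus_{i,j} M_{j} \otimes M_{i}^{*}$, and matching summands with $k_{G} \oplus (\mathrm{proj})$ to conclude that exactly one indecomposable summand $M_{0}$ is non-projective and is itself endo-trivial, the rest being projective. The equivalence relation then coincides with the Krull--Schmidt identification of the non-projective part. Part (b) is immediate from $\End_{k}(M \otimes N) \cong \End_{k}(M) \otimes \End_{k}(N)$, the fact that projective modules form a tensor ideal of $kG\text{-}\modcat$, and $\End_{k}(M) \cong M \otimes M^{*}$, yielding identity $[k_{G}]$ and inverse $-[M] = [M^{*}]$. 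Part (c) is immediate from $\End_{k}(X) = k_{G}$ whenever $\dim_{k} X = 1$.

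For (d), the forward direction uses that $\Res^{G}_{H}$ preserves projectivity, while the converse when $H \geq P$ uses the standard fact that a $kG$-module is projective if and only if its restriction to a Sylow $p$-subgroup is. Part (e) combines (d) with Green correspondence: the homomorphism property is formal, and for injectivity when $H \geq N_{G}(P)$, if $\Res^{G}_{H}(M) \sim k_{H}$ and $M$ is indecomposable endo-trivial, then the Green correspondent $f_{H}(M)$ must equal $k_{H}$, forcing $M \cong k_{G}$. Part (f) is the standard finite-generation statement for $T(G)$, which I would cite as a black box.

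The most substantial item is (g). Assuming $TT(P) = 0$, any $[M] \in TT(N)$ with $N = N_{G}(P)$ restricts trivially in $T(P)$, so the indecomposable endo-trivial representative $M$ has trivial source with vertex $P$. Because $P \trianglelefteq N$ and $N/P$ is a $p'$-group, such $M$ are exactly the inflations of simple $k(N/P)$-modules $S$; endo-triviality then forces $\End_{k}(S) \cong k_{N/P}$, so $\dim_{k}(S) = 1$ and $M \in X(N)$. The reverse inclusion $X(N) \leq TT(N) \cap \ker(\Res^{N}_{P})$ is immediate since any one-dimensional $kN$-module is trivial on $P$. Finally, (h) follows by restricting to $P$ and invoking Dade's computation of $T(P)$, refined via a character-theoretic count for the trivial source case; and (i) is a vertex argument: a proper vertex $Q < P$ would make $M$ a summand of $\Ind^{G}_{Q} \Res^{G}_{Q}(M)$, incompatible with the multiplicity-one appearance of $k_{G}$ in $\End_{k}(M)$, while the block statement follows since the defect group of the block of $M$ contains its vertex. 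The main obstacle is coordinating the vertex-theoretic ingredients in (e), (g) and (i) around Green correspondence, but each is by now standard in the literature.
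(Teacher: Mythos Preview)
Your proposal is correct, but the comparison with the paper is somewhat degenerate: the paper does not actually prove this lemma. It dispatches every item by citation (Carlson--Mazza--Nakano for (a), (b), (d), (f); Mazza--Th\'evenaz for (c), (e), (g); Lassueur--Malle--Schulte for (h); and (i) is declared ``straightforward from (h)''), treating the whole statement as a recollection of known facts. Your sketch therefore supplies strictly more than the paper does, namely the arguments behind those citations, which is appropriate if one wants a self-contained account.

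Two points deserve tightening. In (g) you start from $[M]\in TT(N)$, deduce $M\in X(N)$ via the trivial-source/inflation argument, and then note $X(N)\leq TT(N)\cap\ker(\Res^{N}_{P})$; this yields $TT(N)=X(N)\subseteq\ker(\Res^{N}_{P})$ but not the reverse inclusion $\ker(\Res^{N}_{P})\subseteq TT(N)$. The fix is immediate: your trivial-source argument applies verbatim to any indecomposable endo-trivial $[M]\in\ker(\Res^{N}_{P})$, torsion or not, since $\Res^{N}_{P}(M)\cong k_{P}\oplus(\text{free})$ already forces $M$ to have trivial source, hence to be an inflated one-dimensional module, hence to lie in $X(N)$, which is finite. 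In (i), the phrase ``multiplicity-one appearance of $k_{G}$'' is a red herring; the contradiction is simply that $k_{G}$, having vertex $P$, cannot be a summand of a relatively $Q$-projective module when $Q<P$. The paper's route to (i) is different: it uses the congruence (h), so that $p\nmid\dim_{k}M$, together with Green's theorem that $|P{:}Q|$ divides $\dim_{k}M$ for $M$ indecomposable with vertex $Q$. Finally, in (h) your appeal to ``Dade's computation of $T(P)$'' only covers abelian $P$; the general dimension congruence rests on the Carlson--Th\'evenaz classification, which is why the paper defers to an external reference there as well.
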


\begin{proof} 
For (a) and (b) see \cite[Definition 2.2.]{CarlsonMazzaNakano2009}. 
For (c) see \cite[Section 2]{MazzaThevenaz2007}. 
For (d) see \cite[Proposition 2.6]{CarlsonMazzaNakano2006}. 
Statement (e) is \cite[Lemma 2.7(1)]{MazzaThevenaz2007}.
For (f), see \cite[Corollary 2.5]{CarlsonMazzaNakano2006}. 
For (g), see \cite[Lemma 2.6]{MazzaThevenaz2007} 
and \cite[Theorem 2.3(d)(ii)]{CarlsonMazzaNakano2009}.
The first statement of  (h) is 
\cite[Lemma 2.1]{LassueurMalleSchulte2013} and 
the congruence for trivial source modules is obvious.
Finally, (i) is straightforward from (h).
\end{proof}

Constructively one gets new endo-trivial modules from old ones 
by applying the Heller operator. If $M$ is a $kG$-module, we let 
$\Omega(M)$ denotes the kernel of a projective cover $P(M)\twoheadrightarrow M$
of $M$, and 
$\Omega^{-1}(M)$ denotes the cokernel of 
$M\rightarrowtail I(M)$, where $I(M)$ is
an injective hull of $M$. 
Inductively $\Omega^{n}(M):=\Omega(\Omega^{n-1}(M))$ and 
$\Omega^{-n}(M):=\Omega^{-1}(\Omega^{-n+1}(M))$ for all 
integers $n>1$ and $\Omega^{0}(M)$ is the projective-free part of $M$.
If $M$ is endo-trivial, then so are the modules $\Omega^{n}(M)$ 
for every $n\in\IZ$. In particular the modules $\Omega^{n}(k)$ 
are endo-trivial for every $n\in\IZ$. In $T(G)$ and we have 
$[\Omega^{n}(k)]=n[\Omega(k)]$ for all $n\in\IZ$ so that 
$T(G)\geq \left<[\Omega(k)]\right>\cong \IZ$.

\begin{rem}\label{rem:TF-TT}
The torsion-free rank of $T(G)$ is non-zero 
if the $p$-rank of 
$G$ is at least $2$, and can be calculated explicitly by 
\cite[Theorem 3.1]{CarlsonMazzaNakano2009}:
it depends only on the number of conjugacy classes of 
maximal elementary abelian $p$-subgroups of rank~2.\par
The problem of determining the finite group $TT(G)$ in general is 
much harder.
According to 
Lemma~\ref{lem:basicET}(e) this is linked to the computation of 
the Green correspondence. If $P\in\Syl_{p}(G)$ and $N:=N_{G}(P)$, 
then we denote by $f^{-1}(X(N))$ the set  consisting of the $kG$-Green 
correspondents of the elements of $X(N)$. Again the tensor product 
$\otimes_{k}$ induces a group structure on $f^{-1}(X(N))$ 
such that $f^{-1}(X(N))\cong X(N)$ 
(see~\cite[Proposition 4.1(d)]{Lassueur2013}), 
but we emphasise that $f^{-1}(X(N))$ is not contained in $T(G)$ 
in general. In other words, the $kG$-Green correspondent of a 
one-dimensional $kN_{G}(P)$-module is not necessarily endo-trivial, 
although it is a trivial source module. However, 
if the $p$-rank of $G$ is at least 2 and $P$ is not 
semi-dihedral, then $f^{-1}(X(N))\cap T(G)=\ker(\Res^{G}_{P})= TT(G)$. 
This follows from Lemma~\ref{lem:basicET}(g) and the fact that 
$TT(P)\cong \{0\}$ in all such cases (see \cite{Thevenaz2007}).
\end{rem}

In order to detect whether a module in $f^{-1}(X(N))$ is endo-trivial 
we have the following character-theoretic  criterion.

\begin{thm}[{}{\cite[Theorem 2.2]{LassueurMalle2014}}]  
\label{prop:torchar}
Let $V$ be an indecomposable trivial source $kG$-module
with $k$-dimension prime to $p$. Then $V$ is endo-trivial if and only if
$\chi_{\hat V}(u)=1$ for any non-trivial $p$-element $u\in G$.
\end{thm}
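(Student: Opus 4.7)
The plan is to work with the unique $\mathcal{O}G$-lattice lift $\hat V$ of $V$, compute the character of $\hat V \otimes_{\mathcal{O}} \hat V^{\vee}$ at non-trivial $p$-elements, and exploit two standard features of trivial source modules: first, closure of the class of trivial source modules under tensor products, duals, and direct summands, which guarantees that any decomposition in $kG\text{-}\modcat$ of a tensor product of such modules lifts summand-wise to $\mathcal{O}G$-lattices; and second, Brou\'e's identity $\chi_{\hat V}(u) = \dim_k V[\langle u\rangle]$ for any $p$-element $u \in G$, which identifies the character value with the $k$-dimension of the Brauer construction. This identity has two consequences I will use on equal footing: $\chi_{\hat V}(u)$ is a non-negative rational integer, and $V$ is projective if and only if $\chi_{\hat V}(u) = 0$ for every non-trivial $p$-element $u$.

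For the forward direction, suppose $V$ is endo-trivial, so $V \otimes V^{*} \cong k \oplus P$ for some projective $kG$-module $P$. By the closure properties both sides are trivial source, and lifting yields $\hat V \otimes_{\mathcal{O}} \hat V^{\vee} \cong \mathcal{O} \oplus \hat P$ with $\hat P$ a projective $\mathcal{O}G$-lattice. Evaluating at any non-trivial $p$-element $u$ kills the $\hat P$-contribution since projective characters vanish on $p$-singular elements, so $\chi_{\hat V}(u)\cdot \overline{\chi_{\hat V}(u)} = 1$. Combined with the non-negative integrality supplied by Brou\'e's identity, this forces $\chi_{\hat V}(u) = 1$.

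For the converse, since $p \nmid \dim_k V$, the scaled trace map $\frac{1}{\dim_k V}\,\mathrm{tr} \colon \End_k(V) \to k$ splits the natural inclusion $k \hookrightarrow \End_k(V) \cong V \otimes V^{*}$, yielding a decomposition $V \otimes V^{*} \cong k \oplus W$. Closure of trivial source modules under the operations involved ensures that $W$ is again trivial source, and its unique lift $\hat W$ satisfies $\chi_{\hat W}(u) = |\chi_{\hat V}(u)|^{2} - 1 = 0$ for every non-trivial $p$-element $u$ by the hypothesis. Brou\'e's criterion then forces $W$ to be projective, and hence $V$ is endo-trivial. The main obstacle is not the algebraic manipulations themselves but the invocation of Brou\'e's identification of $\chi_{\hat V}(u)$ with $\dim_k V[\langle u\rangle]$: it is what simultaneously supplies the integrality and non-negativity used in the forward direction, and the vanishing-implies-projectivity criterion used in the converse; once this is in hand both halves reduce to computing $\chi_{\hat V \otimes_{\mathcal{O}} \hat V^{\vee}}(u) = |\chi_{\hat V}(u)|^{2}$ and comparing.
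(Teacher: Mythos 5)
The paper does not prove this statement; it cites it as \cite[Theorem~2.2]{LassueurMalle2014}, so there is no in-house proof to compare against. Your argument, however, is correct and is essentially the standard proof of this result: it hinges exactly on the two facts you isolate about trivial source modules, namely closure under tensor/dual/summand and the identification (Brou\'e) of the ordinary character value $\chi_{\hat V}(u)$ at a $p$-element $u$ with $\dim_k V[\langle u\rangle]$.

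A few checks worth making explicit. For the forward direction you use that $\chi_{\hat V^\vee}(u)=\overline{\chi_{\hat V}(u)}$, that a projective lattice has vanishing character on $p$-singular elements, and that $\chi_{\hat V}(u)$ is a non-negative rational integer so that $\chi_{\hat V}(u)^2=1$ forces $\chi_{\hat V}(u)=1$; all of these are sound. For the converse, the splitting of $k\hookrightarrow\End_k(V)$ by $\tfrac{1}{\dim_k V}\,\mathrm{tr}$ uses $p\nmid\dim_k V$ (which is part of the hypothesis) and $kG$-linearity of the trace, giving $V\otimes V^*\cong k\oplus W$ with $W$ again trivial source; lifting is unambiguous by \cite[Chap.4, Theorem~8.9(iii)]{NagaoTsushima}. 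The remaining point deserving a sentence in a fully written-out proof is the ``vanishing implies projective'' criterion: from $\chi_{\hat W}(u)=\dim_k W[\langle u\rangle]=0$ for all non-trivial $p$-elements $u$, one passes to $W[Q]=0$ for every non-trivial $p$-subgroup $Q$ by choosing a central subgroup $\langle u\rangle$ of order $p$ in $Q$ and invoking the transitivity $W[Q]\cong (W[\langle u\rangle])[Q/\langle u\rangle]$ of the Brauer construction; this is what lets you conclude every indecomposable summand of $W$ has trivial vertex. With that filled in, the proof is complete and matches the expected route via Brou\'e's formula.
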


\begin{lem}\label{GxH-module}
Let $G$ and $H$ be two finite groups with
$p\,|\,|H|$. Assume that $M$ is an indecomposable 
trivial source endo-trivial $kG$-module. Then,
$M\otimes k_H$ is endo-trivial as a $k[G\times H]$-module
if and only if $\dim_k(M) = 1$. 
\end{lem}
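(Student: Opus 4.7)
The plan is to unfold the definition of endo-triviality at the level of the external tensor product and reduce everything to a single question about projectivity of a tensor product of the form $P \otimes k_{H}$ over $k[G\times H]$.

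First I would use the standard $k[G\times H]$-module isomorphism
$$
\End_{k}(M\otimes k_{H})\;\cong\;\End_{k}(M)\otimes_{k}\End_{k}(k_{H})\;\cong\;\End_{k}(M)\otimes_{k} k_{H},
$$
coming from the external tensor product (with $G$ acting by conjugation on $\End_{k}(M)$ and $H$ acting trivially on $k_{H}$). Since $M$ is endo-trivial over $kG$, I can split $\End_{k}(M)\cong k_{G}\oplus P$ with $P$ a projective $kG$-module by Lemma~\ref{lem:basicET}\,(a), and therefore
$$
\End_{k}(M\otimes k_{H})\;\cong\;k_{G\times H}\oplus (P\otimes k_{H}).
$$
Thus $M\otimes k_{H}$ is endo-trivial if and only if the $k[G\times H]$-module $P\otimes k_{H}$ is projective.

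The main point is to show that $P\otimes k_{H}$ is projective over $k[G\times H]$ if and only if $P=0$. I would restrict to the subgroup $1\times H\cong H$: since the action of $(1,h)$ is trivial on the first factor and trivial on $k_{H}$, one finds
$$
\Res^{G\times H}_{1\times H}(P\otimes k_{H})\;\cong\;(\dim_{k}P)\cdot k_{H}
$$
as $kH$-modules. Under the hypothesis $p\mid |H|$ the trivial module $k_{H}$ is not projective over $kH$, and projectivity is preserved under restriction to a subgroup, so $P\otimes k_{H}$ can only be projective over $k[G\times H]$ when $\dim_{k}P=0$, i.e.\ $P=0$. Comparing dimensions in $\End_{k}(M)\cong k_{G}\oplus P$ then shows $P=0$ is equivalent to $\dim_{k}M=1$.

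Conversely, if $\dim_{k}M=1$ then $\End_{k}(M)\cong k_{G}$ and hence $\End_{k}(M\otimes k_{H})\cong k_{G\times H}$, so that $M\otimes k_{H}$ is trivially endo-trivial. The only substantive step is the restriction argument that detects the non-projectivity of $P\otimes k_{H}$; the rest is routine manipulation with external tensor products.
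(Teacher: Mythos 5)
Your proof is correct, but it follows a genuinely different route from the paper. The paper's argument first shows that $M\otimes k_H$ is an indecomposable trivial source $k[G\times H]$-module (via the compatibility of induction with external tensor products and K\"ulshammer's result), lifts it to $\mathcal{O}$, and then applies the character-theoretic criterion of Theorem~\ref{prop:torchar}: the afforded character takes value $1$ at $(u,v)$ and $(u,1)$ but value $\dim_k(M)$ at $(1,v)$, so endo-triviality forces $\dim_k(M)=1$. You instead work directly from the definition: writing $\End_k(M\otimes k_H)\cong\End_k(M)\otimes_k k_H\cong k_{G\times H}\oplus(P\otimes k_H)$ with $P$ projective over $kG$, endo-triviality is (by Krull--Schmidt cancellation of the trivial summand, a step worth stating explicitly) equivalent to projectivity of $P\otimes k_H$, and your restriction to $1\times H$, where $P\otimes k_H$ becomes $(\dim_k P)\cdot k_H$ with $k_H$ non-projective since $p\mid |H|$, forces $P=0$, i.e.\ $\dim_k(M)=1$. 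The paper's proof in fact opens with the same endomorphism decomposition and the observation that $L\otimes k_H$ is not projective, but then switches to the character argument; your restriction argument supplies the justification the paper leaves implicit and carries the whole proof. What your approach buys is elementarity and generality: it needs neither the indecomposability nor the trivial-source hypothesis on $M$, nor the lift to $\mathcal{O}$ and Theorem~\ref{prop:torchar}; what the paper's approach buys is the intermediate facts (indecomposability, trivial source, and the explicit ordinary character values of $M\otimes k_H$) which are in the spirit of the techniques used throughout the rest of the paper.
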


\begin{proof}
Since $M$ is endo-trivial, it holds that
$p\,{\not|}\dim_k(M)$ by Lemma \ref {lem:basicET}(h)
and that  
$M^*\otimes M = k_G\oplus L$ for a projective $kG$-module $L$.
Hence,
\begin{align*}
(M\otimes k_H)^*\otimes(M\otimes k_H)
&= 
M^*\otimes {k_H}^*\otimes M \otimes k_H
\\
&= 
(M^*\otimes M)\otimes (k_H\otimes k_H) 
=  
(k_G\oplus L)\otimes k_H 
\\ 
&=  (k_G\otimes k_H)\oplus(L\otimes k_H).
\end{align*}
Since $p| |H|$, $L\otimes k_H$ is not projective 
as $k[G\times H]$-module.

Now, let $G_0$ be a subgroup of $G$. Then,
\begin{align*}
\Ind_{G_{0}}^{G}(k_{G_0})\otimes k_H 
&=
(kG\otimes_{kG_0}k_{G_0})\otimes k_H
\\
&\cong
k[G\times H]\otimes_{k[G_0\times H]}(k_{G_0}\otimes k_H)
= 
\Ind_{G_0\times H}^{G\times H}(k_{G_0\times H})
\end{align*}
This shows that $M \otimes k_H$ is an indecomposable trivial 
source $k[G\times H]$-module,
see \cite[Proposition 1.1]{Kuelshammer1993}.
So let $\chi$ be an ordinary character of $G\times H$ 
afforded by $M\otimes k_H$.
Take any $p$-elements
$u\in G$ and $v\in H$ with $u\not= 1$ and $v\not= 1$.
Then, we know by Theorem~\ref{prop:torchar} that
\begin{align*}
\chi(u,v) &= \chi_{\hat M}(u){\cdot}1_H(v) = 1{\cdot}1= 1,
\\
\chi(u,1) &= \chi_{\hat M}(u){\cdot}1_H(1) = 1{\cdot}1 = 1  
\ \ \text{ and } 
\\
\chi(1,v) &= \chi_{\hat M}(1){\cdot}1_H(v) = \chi_{\hat M}(1) = \dim_k(M).
\end{align*}
Therefore, again by making use of Theorem~\ref{prop:torchar},
the assertion follows.
\end{proof}

\begin{lem}\label{princ_type}
Let $B$ be a block of $G$ with a maximal $B$-Brauer pair
$(P,e)$, and let $N_G(P,e)$ be the stabilizer of $(P,e)$,
see \cite[p.346]{Thevenaz}.
\begin{enumerate}
  \item[\rm(a)]
If further we have $N_G(P,e)=N_G(P)$, then $e$ is the unique block of
$C_G(P)$ with $e^G=B$ (block induction).
  \item[\rm(b)]
In particular, if the Brauer correspondent of $B$ in $N_G(P)$
has a one-dimensional $kN_G(P)$-module,
then $e$ is the unique block of
$C_G(P)$ with $e^G=B$.
\end{enumerate}
\end{lem}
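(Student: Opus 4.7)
The strategy is to prove (a) as a direct consequence of the $G$-conjugacy of maximal $B$-Brauer pairs, and then to deduce (b) by verifying that the stabilizer hypothesis of (a) is forced whenever the Brauer correspondent carries a one-dimensional module. The only ingredients needed are: the description of maximal $B$-Brauer pairs as a single $G$-conjugacy class, the fact that blocks of $C_G(P)$ covered by a given block of $N_G(P)$ form a single $N_G(P)$-orbit, and Clifford's theorem for the normal subgroup $C_G(P)\trianglelefteq N_G(P)$.

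For part (a), let $e'$ be any block of $C_G(P)$ with $(e')^G=B$. Since $(P,e)$ being maximal is equivalent to $P$ being a defect group of $B$, the pair $(P,e')$ is automatically a maximal $B$-Brauer pair too. Because maximal $B$-Brauer pairs form one $G$-conjugacy class, and a conjugation $(P,e)^g=(P,e')$ forces $P^g=P$, i.e.\ $g\in N_G(P)$, we obtain $e'=e^{\,g}$ for some $g\in N_G(P)$. The hypothesis $N_G(P,e)=N_G(P)$ means the $N_G(P)$-orbit of $e$ is the singleton $\{e\}$, whence $e'=e$.

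For part (b), let $\eta$ be a one-dimensional $kN_G(P)$-module lying in the Brauer correspondent $b$ of $B$; note that $b$ is well-defined since $P$ is a defect group of $B$, and that $b$ covers $e$ because $(P,e)$ is a $B$-Brauer pair. By Clifford's theorem, $\Res^{N_G(P)}_{C_G(P)}(\eta)$ is again simple (being one-dimensional), hence lies in a unique block $e'$ of $C_G(P)$; and since $b$ covers $e'$, the block $e'$ must belong to the $N_G(P)$-orbit of $e$. On the other hand, every $N_G(P)$-conjugate of $\eta$ coincides with $\eta$ itself (again because $\eta$ is one-dimensional), so $\Res^{N_G(P)}_{C_G(P)}(\eta)$, and therefore $e'$, is $N_G(P)$-invariant. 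A fixed element of the $N_G(P)$-orbit of $e$ forces that orbit to be a singleton, so $N_G(P,e)=N_G(P)$ and (a) delivers the uniqueness of $e$. The only subtle point is to keep the dictionary between Brauer pairs, block induction, and block covering under Clifford theory straight; no deeper machinery than Brauer's first main theorem and the transitive $G$-action on maximal Brauer pairs is required.
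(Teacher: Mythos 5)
Your proof is correct but takes a genuinely different route from the paper's.

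For part (a), the paper works directly with the Brauer homomorphism: since $N_G(P,e)=N_G(P)$, the idempotent $e$ is a block idempotent of $kN_G(P)$ and hence equals $\mathrm{Br}_P^G(1_B)$ by Brauer's first main theorem; any $e'$ with $(e')^G=B$ then satisfies $e'=\mathrm{Br}_P^G(1_B)\cdot e'=ee'$, forcing $e'=e$. You instead invoke $G$-transitivity on maximal $B$-Brauer pairs to land $e'$ in the $N_G(P)$-orbit of $e$ and then use $N_G(P,e)=N_G(P)$ to collapse that orbit. Both are standard; note however that you should briefly justify why $(e')^G=B$ makes $(P,e')$ a $B$-Brauer pair (it follows from the characterisation $\mathrm{Br}_P^G(1_B)e'=e'$, which is exactly the tool the paper uses). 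For part (b), the paper applies Fong--Reynolds to get $kN_G(P)b\cong\mathrm{Mat}_{|N_G(P):N_G(P,e)|}(kN_G(P,e)e)$, and the existence of a one-dimensional module immediately forces the matrix size to be $1$. Your Clifford-theoretic argument, restricting the one-dimensional module to $C_G(P)$ and observing that its block is then $N_G(P)$-invariant, is more hands-on but equally valid. The Fong--Reynolds route is slicker and gives the index directly, whereas your route makes explicit use of the normality of $C_G(P)$ in $N_G(P)$ and the transitive $N_G(P)$-action on covered blocks; the conclusion is the same.
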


\begin{proof}
Set $N:=N_G(P)$ an $H:=N_G(P,e)$.

(a) We use the same notation $e$ to mean a block and its
block idempotent. In general, $e$ is a block idempotent even
of $kH$, see \cite[Exercise (40.2)(b)]{Thevenaz}, and hence of $kN$ by the assumption. Namely, $e$ is the block 
idempotent of the Brauer correspondent of $B$ in $N$.
Hence, Brauer's 1st Main Theorem implies
$e = {\mathrm{Br}}^G_P(1_B)$. Now, let $e'$ be any block
idempotent of $kC_G(P)$ with $(e')^G=B$ (block induction).
Then, by \cite[Chap.5, Theorem 3.5(ii)]{NagaoTsushima},
$e' = {\mathrm{Br}}^G_P(1_B){\cdot}e' = ee'$, which yields $e'=e$.

(b) By the result of Fong-Reynolds
\cite[Chap.5, Theorem 5.10(ii)]{NagaoTsushima}, 
$kNb\cong {\mathrm{Mat}}_{|N:H|}(kHe)$ 
as $k$-algebras where
$b$ is the block idempotent of the Brauer correspondent of 
$B$ in $N$. 
Since $b$ has a one-dimensional $kN$-module, we have $|N:H|=1$,  
so the assertion follows by (a).
\end{proof}

\subsection{Puig equivalences}\label{ssec:puigequiv}

\begin{lem}\label{Mathcal_O_K}
Let $G$ and $H$ be finite groups, and let $\mathfrak M$ and
$\mathfrak N$, respectively, be an 
$(\mathcal OG,\mathcal OH)$- and
$(\mathcal OH,\mathcal OG)$-bilattices. Then,
for any $R\in\{ K, k \}$, we have
$$
R\otimes_{\mathcal O}(\mathfrak M\otimes_{\mathcal OH}\mathfrak N)
\cong (R\otimes_{\mathcal O}\mathfrak M)
   \otimes_{KH} 
      (R\otimes_{\mathcal O}\mathfrak N) \ 
\text{as }(RG,RG)\text{{\rm{-}}bimodules}.
$$
\end{lem}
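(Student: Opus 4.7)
The plan is to deduce the isomorphism from associativity of tensor products together with the base-change identity $RH \otimes_{\mathcal{O}H} X \cong R \otimes_{\mathcal{O}} X$ for any left $\mathcal{O}H$-module $X$. (I read the $\otimes_{KH}$ on the right-hand side as a typo for $\otimes_{RH}$, since otherwise mixing $K$ and $k$ would be ill-formed.)

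First I would establish the base-change isomorphism. Since $RH = R \otimes_{\mathcal{O}} \mathcal{O}H$ as $(R, \mathcal{O}H)$-bimodules, associativity of tensor products yields
$$RH \otimes_{\mathcal{O}H} \mathfrak{N} \;\cong\; (R \otimes_{\mathcal{O}} \mathcal{O}H) \otimes_{\mathcal{O}H} \mathfrak{N} \;\cong\; R \otimes_{\mathcal{O}} \mathfrak{N}$$
as $(RH, RG)$-bimodules. Dually, $R \otimes_{\mathcal{O}} \mathfrak{M}$ carries a natural right $RH$-action extending its right $\mathcal{O}H$-action. I would then insert the base-change isomorphism into the right-hand side of the claimed identity and collapse the resulting chain by two further uses of associativity:
\begin{align*}
(R \otimes_{\mathcal{O}} \mathfrak{M}) \otimes_{RH} (R \otimes_{\mathcal{O}} \mathfrak{N})
&\cong (R \otimes_{\mathcal{O}} \mathfrak{M}) \otimes_{RH} (RH \otimes_{\mathcal{O}H} \mathfrak{N}) \\
&\cong (R \otimes_{\mathcal{O}} \mathfrak{M}) \otimes_{\mathcal{O}H} \mathfrak{N} \\
&\cong R \otimes_{\mathcal{O}} (\mathfrak{M} \otimes_{\mathcal{O}H} \mathfrak{N}),
\end{align*}
the last step being legitimate because the left $\mathcal{O}$-action and the right $\mathcal{O}H$-action on $\mathfrak{M}$ commute.

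Tracking an elementary tensor through this chain yields the concrete map $(r \otimes m) \otimes (r' \otimes n) \mapsto rr' \otimes (m \otimes n)$, which is manifestly bilinear with respect to the outer $(RG, RG)$-actions. No serious obstacle is anticipated: the lemma is a purely formal consequence of associativity of tensor products together with base change along $\mathcal{O} \to R$. The only point requiring mild care is verifying that each intermediate isomorphism respects the full $(RG, RG)$-bimodule structure, not merely the underlying $R$-module structure; but since both outer $RG$-actions sit on positions untouched by the middle tensor products, this check is entirely routine.
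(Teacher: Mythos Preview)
Your proof is correct and is precisely the ``direct calculation'' the paper has in mind: the paper itself offers no argument beyond citing \cite[Exercise (9.7)]{Thevenaz} and \cite[I Lemma 14.5(3)]{Landrock1983}, so your associativity-plus-base-change chain is exactly the intended content (and you are right that $\otimes_{KH}$ should read $\otimes_{RH}$).
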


\begin{proof}
This follows  by direct calculations,
see e.g. \cite[Exercise (9.7)]{Thevenaz} 
and \cite[I Lemma 14.5(3)]{Landrock1983}.
\end{proof}

\begin{lem}\label{Puig_tsm}
Let $G$ and $G'$ be finite groups, and let $B$ and $B'$, respectively,
be block algebras of $\mathcal OG$ and $\mathcal OG'$ with the same
defect group $P$ (and hence $P \leqslant G\cap G')$.
Assume that $B$ and $B'$ are Puig equivalent. Namely,
there exist an $(\mathcal OG, \mathcal OG')$-lattice $\mathfrak M$ 
and an $(\mathcal OG', \mathcal OG)$-lattice $\mathfrak N$
such that the pair $(\mathfrak M, \mathfrak N)$ 
induces a Morita equivalence between $B{\text{-}}{\sf mod}$ and
$B'{\text{-}}{\sf mod}$,
$\mathfrak N = {\mathfrak M}^\vee$, and $\mathfrak M$
is an indecomposable trivial source
lattice with vertex $\Delta P$ as an $\mathcal O[G\times G']$-lattice.
Then, if $V$ is an indecomposable trivial source 
$kG$-module in $B$ with vertex $Q$ such that $Q \leqslant P$,
it holds that $k\mathfrak N\otimes_{kG}V$ is an indecomposable
trivial source $kG'$-module in $B'$ with vertex $Q$ 
(if we replace $Q$ by ${g'}^{-1}Qg'$ for some element $g'\in G'$).
\end{lem}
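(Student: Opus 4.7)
The plan is to combine three ingredients. First, since $(\mathfrak{M},\mathfrak{N})$ induces a Morita equivalence $B{\text{-}}\modcat\simeq B'{\text{-}}\modcat$, the functor $k\mathfrak{N}\otimes_{kG}-$ preserves indecomposability and respects block membership, so that $W:=k\mathfrak{N}\otimes_{kG}V$ is at once indecomposable and belongs to $B'$.

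Second, for the trivial source property and an upper bound on the vertex, I would exploit the hypothesis that $k\mathfrak{N}$ is an indecomposable trivial source $k[G'\times G]$-module with vertex $\Delta P$. This implies $k\mathfrak{N}\mid \Ind_{\Delta P}^{G'\times G}(k)\cong kG'\otimes_{kP}kG$ as $(kG',kG)$-bimodules. Since $V\mid \Ind_Q^G(k)$ (because $V$ has trivial source and vertex $Q$), one then obtains
\[
W\mid (kG'\otimes_{kP}kG)\otimes_{kG}\Ind_Q^G(k)=kG'\otimes_{kP}\Res^G_P\Ind_Q^G(k),
\]
and Mackey's formula $\Res^G_P\Ind_Q^G(k)=\bigoplus_{g\in P\backslash G/Q}\Ind_{P\cap gQg^{-1}}^P(k)$ yields
\[
W\mid\bigoplus_{g\in P\backslash G/Q}\Ind_{P\cap gQg^{-1}}^{G'}(k).
\]
In particular, $W$ has trivial source, and its vertex has order at most $|Q|$.

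Third, for the reverse inequality on the size of the vertex I would invoke symmetry: $V\cong k\mathfrak{M}\otimes_{kG'}W$, and rerunning the same argument with the roles of $\mathfrak{M}$ and $\mathfrak{N}$ (and of $G$ and $G'$) interchanged forces the vertex of $V$ (namely $Q$) to have order at most that of the vertex of $W$. Combined with the previous bound, the vertex of $W$ has order exactly $|Q|$, and hence, up to $G'$-conjugacy, equals $gQg^{-1}$ for some $g\in G$ with $gQg^{-1}\le P$.

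The main subtlety is this last step: concluding that the vertex of $W$ is in fact $G'$-conjugate to $Q$ itself, rather than merely to some other $G$-conjugate of $Q$ sitting inside $P$. This identification rests on the fusion-preserving character of a Puig equivalence: the $B$-Brauer pair $(Q,e_Q)$ attached to $V$ corresponds under the equivalence to a $B'$-Brauer pair $(Q,e'_Q)$, and the $G$-conjugacy $(Q,e_Q)\sim (gQg^{-1},\lconj{g}{e_Q})$ of $B$-Brauer pairs translates into a $G'$-conjugacy of the corresponding $B'$-Brauer pairs. Invoking this standard feature of source-algebra (Puig) equivalences then gives the required $G'$-conjugacy, completing the identification of the vertex.
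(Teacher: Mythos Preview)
Your argument is correct and follows the standard route for this result. The paper itself does not give an independent proof: it simply points to \cite[Lemma~A.3(i),(iii)]{KoshitaniMuellerNoeske2011} (noting that $G'$ need not be a subgroup of $G$) together with \cite[Remark~7.5]{Puig1999} and \cite[Theorem~4.1]{Linckelmann2001}. Your three-step outline (indecomposability from Morita equivalence; trivial source and upper bound on the vertex via $k\mathfrak{N}\mid\Ind_{\Delta P}^{G'\times G}(k)$ combined with Mackey; equality of vertex orders by symmetry) is exactly the content of those references, so there is no substantive difference in approach---you have simply unpacked what the paper leaves as a citation.

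One small comment on your final paragraph: you correctly flag the only delicate point, namely that the subgroup $gQg^{-1}\leq P$ produced by Mackey (with $g\in G$) must be shown to be $G'$-conjugate to $Q$. Your appeal to the fusion-preserving nature of Puig equivalences is the right idea, and this is precisely what \cite[Theorem~4.1]{Linckelmann2001} and the source-algebra arguments of \cite{Puig1999} supply; you might simply cite one of those rather than sketching the Brauer-pair translation yourself.
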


\begin{proof}
This follows from exactly the same argument given in
the proof of \cite[Lemma A.3 (i), (iii)]{KoshitaniMuellerNoeske2011},
though $G'$ is not necessarily a subgroup of $G$.
See also \cite[Remark 7.5]{Puig1999} and \cite[Theorem 4.1]{Linckelmann2001}.
\end{proof}

\section{Endo-trivial modules for groups with Klein-four Sylow subgroups}

\begin{hypo}
Henceforth we assume $p = 2$, 
so that $k$ is an algebraically closed field of characteristic $2$. 
We assume further that $G$ has a Klein-four
Sylow $2$-subgroup $P \cong C_2\times C_2$.
Set $N:=N_{G}(P)$ and $\bar{N}:=N/O_{2'}(N)$. 
Recall that $\Aut(P) \cong \fS_{3}$,
(see \cite[Chap.7, Theorem 7.1]{Gorenstein1968}), 
so that  one of the following holds: 
\begin{enumerate}
\renewcommand{\labelenumi}{\rm{(\roman{enumi})}}
  \item  $|N_{G}(P):C_{G}(P)|=1$ and there are three conjugacy classes 
of involutions. By Burnside transfer theorem and its converse,
this happens if and only if  $G$ is $2$-nilpotent, that is 
$\bar{N}\cong C_{2}\times C_{2}$\,; or
  \item  $|N_{G}(P):C_{G}(P)|=3$ and all involutions are conjugate in $G$. 
In this case $\bar{N}\cong \fA_{4}$. 
  \end{enumerate} 
\end{hypo}

Next we sum up what is known on the structure of the group 
$T(G)$ in previously published articles.

\begin{lem}\label{lem:TGKlein}
Let $G$ be a finite group having a Sylow $2$-subgroup 
$P\cong C_{2}\times C_{2}$. 
\begin{enumerate}
\renewcommand{\labelenumi}{\rm{(\alph{enumi})}}
  \item
$T(C_{2}\times C_{2})=\left<[\Omega(k)]\right>\cong \IZ$;
  \item
$T(G)=TT(G)\oplus \IZ$, 
where $\IZ\cong \left<[\Omega(k)]\right>$ 
and 
$$TT(G)=\ker(\Res^{G}_{P})=f^{-1}(X(N_{G}(P)))\cap T(G)$$ consists of 
the classes of the indecomposable 
endo-trivial modules with a trivial source, 
or alternatively 
the classes of 
the indecomposable endo-trivial $kG$-modules whose 
$kN_{G}(P)$-Green correspondent is one-dimensional;
 \item
$T(\fA_{5})\cong T(\fA_{4})=X(\fA_{4})
\oplus \left<[\Omega(k)]\right>\cong \IZ/3\IZ\oplus \IZ$, 
where $X(\fA_{4})\cong  \IZ/3\IZ$ consists of the three 
one-dimensional $k\fA_{4}$-modules, which we denote by 
$k_{\fA_{4}}$, $k_{\omega}$, $k_{\overline{\omega}}$;
 \item If $G=O_{2'}(G)\rtimes P$, then $T(G)\cong X(G)\oplus \IZ$.
\end{enumerate}  
 \end{lem}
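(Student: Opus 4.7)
The plan is to derive each of parts~(a)--(d) either from the preparatory results of Section~\ref{sec:pre} or by citation of the earlier literature, since the assertions are essentially known. For part~(a), I would invoke the classification of endo-trivial modules for finite $2$-groups as surveyed in \cite{Thevenaz2007}: the Klein-four group is abelian of rank~$2$ and is neither cyclic, semi-dihedral nor (generalised) quaternion, so its torsion subgroup $TT(C_{2}\times C_{2})$ vanishes, its torsion-free rank is~$1$, and $T(C_{2}\times C_{2})=\langle[\Omega(k)]\rangle\cong\IZ$.

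For part~(b), I would split the argument according to the decomposition in Lemma~\ref{lem:basicET}(f). The torsion part is a direct application of Remark~\ref{rem:TF-TT}, since the $2$-rank of $G$ equals~$2$ and $P\cong C_{2}\times C_{2}$ is not semi-dihedral; it yields
\[
TT(G)=\ker\bigl(\Res^{G}_{P}\bigr)=T(G)\cap f^{-1}(X(N)).
\]
For the torsion-free part, \cite[Theorem~3.1]{CarlsonMazzaNakano2009} computes the rank as the number of conjugacy classes of maximal elementary abelian $2$-subgroups of rank~$2$, which here is~$1$ (namely $P$ itself), and $[\Omega(k)]$ generates this $\IZ$-summand because $\Res^{G}_{P}[\Omega(k)]=[\Omega(k)]$ generates $T(P)\cong\IZ$ by~(a). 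The alternative description of $TT(G)$ in terms of one-dimensional $kN$-Green correspondents then combines Lemma~\ref{lem:basicET}(e), which identifies $[\Res^{G}_{N}(M)]$ with the class of the Green correspondent $f_{N}(M)$ for indecomposable endo-trivial $M$, with Lemma~\ref{lem:basicET}(g) (applicable since $TT(P)=0$), giving $TT(N)\cong X(N)$.

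For part~(c), I would first observe that $P\trianglelefteq\fA_{4}$, so $N_{\fA_{4}}(P)=\fA_{4}$, the Green correspondence is the identity, and part~(b) directly yields $T(\fA_{4})=X(\fA_{4})\oplus\IZ$ with $X(\fA_{4})\cong\fA_{4}^{ab}\cong\IZ/3\IZ$. For $\fA_{5}$, note that $N_{\fA_{5}}(P)=\fA_{4}$, so Lemma~\ref{lem:basicET}(e) provides an injection $T(\fA_{5})\hookrightarrow T(\fA_{4})$ sending $[\Omega(k)]$ to $[\Omega(k)]$. The remaining task is surjectivity onto $X(\fA_{4})$: the Green correspondents of $k_{\omega}$ and $k_{\overline{\omega}}$ can be realised as the $5$-dimensional indecomposable trivial-source modules $\Ind^{\fA_{5}}_{\fA_{4}}(k_{\omega})$ and $\Ind^{\fA_{5}}_{\fA_{4}}(k_{\overline{\omega}})$, both lifting to the unique $5$-dimensional irreducible character $\chi$ of $\fA_{5}$; since $\chi$ takes value~$1$ on the involutions, Theorem~\ref{prop:torchar} certifies their endo-triviality.

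Finally, for part~(d), the $2$-nilpotent case $G=O_{2'}(G)\rtimes P$ is precisely the specialisation of the main results of \cite{CarlsonMazzaThevenaz2011} and \cite{NavarroRobinson2012}, which give $T(G)\cong X(G)\oplus\IZ$ for any $p$-nilpotent finite group. The principal obstacle throughout is the surjectivity step in~(c); once the two candidate modules are identified, however, the argument reduces to a single character-table check via Theorem~\ref{prop:torchar}, so the real work lies in correctly collating the literature rather than in producing new computations.
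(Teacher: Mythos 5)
Your proposal is correct, and for parts (a), (b) and (d) it follows the paper's own route: (a) is the known classification for the Klein-four group (the paper cites Dade \cite[Theorem 9.13]{Dade1978}, you cite the survey \cite{Thevenaz2007} -- same content), (b) combines \cite[Theorem 3.1]{CarlsonMazzaNakano2009} with Lemma~\ref{lem:basicET}(e),(g) exactly as the paper does (your observation that $\Res^{G}_{P}$ sends $[\Omega(k)]$ to a generator of $T(P)$ and hence splits off the $\IZ$-summand is a slightly more explicit version of the paper's ``$TF(G)\cong\IZ$ generated by $[\Omega(k)]$''), and (d) is the same citation of \cite{CarlsonMazzaThevenaz2011} and \cite{NavarroRobinson2012}. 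The one genuine divergence is in part (c): for the isomorphism $T(\fA_{5})\cong T(\fA_{4})$ the paper simply invokes \cite[Theorem 4.2]{CarlsonMazzaNakano2009}, using that $\fA_{4}$ is strongly $2$-embedded in $\fA_{5}$, whereas you prove surjectivity of $\Res^{\fA_{5}}_{\fA_{4}}$ by hand, identifying the Green correspondents of $k_{\omega}$ and $k_{\overline{\omega}}$ with the five-dimensional induced modules $\Ind^{\fA_{5}}_{\fA_{4}}(k_{\omega})$, $\Ind^{\fA_{5}}_{\fA_{4}}(k_{\overline{\omega}})$ (whose lifts afford $\chi_{5}$, with $\chi_{5}(u)=1$ on involutions) and applying Theorem~\ref{prop:torchar}. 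This is legitimate: strong $2$-embedding gives trivial intersections of distinct Sylow $2$-subgroups, so the induced modules split as Green correspondent plus projective, and a dimension/block argument (or the paper's later Lemma~\ref{GreenCorrA5}, proved by the same direct calculation) shows they are indecomposable. Your route is more computational and self-contained -- it anticipates Lemma~\ref{GreenCorrA5} and does not need the abstract strongly-embedded-subgroup theorem -- at the cost of having to justify indecomposability of the induced modules and relying on the character-theoretic criterion of Theorem~\ref{prop:torchar}; the paper's citation is shorter but imports more machinery. Either way the conclusion stands.
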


\begin{proof}
Part (a) was proved by Dade \cite[Theorem 9.13]{Dade1978}. 
For (b), it is  straightforward from 
\cite[Theorem 3.1]{CarlsonMazzaNakano2009} 
that $TF(G)\cong \IZ$ generated by $[\Omega(k)]$. Now since $TT(P)=\{[k]\}$ 
by (a), clearly we must have $TT(G)=\ker(\Res^{G}_{P})$. The rest follows 
directly from Lemma \ref{lem:basicET}(d), (e) and (g).
For (c) it follows from (b) and Lemma~\ref{lem:basicET}(h) that 
$T(\fA_{4})=X(\fA_{4})\oplus \left<[\Omega(k)]\right>$ and 
$X(\fA_{4})\cong (\fA_{4}/[\fA_{4}, \fA_{4}])_{2'}\cong\IZ/3\mathbb Z$. 
In addition $T(\fA_{5})\cong T(\fA_{4})$ since $\fA_{4}$ is 
strongly $2$-embedded in $\fA_{5}$,
see \cite[Theorem 4.2]{CarlsonMazzaNakano2009}.
Part (d) was conjectured in \cite{CarlsonMazzaThevenaz2011} 
and proven in \cite{NavarroRobinson2012}. 
\end{proof}

Thanks to Lemma~\ref{lem:TGKlein}(b), 
it remains to understand the group $TT(G)$ in general.
In the next section we describe a necessary and 
sufficient condition under which we have 
$$TT(G)=f^{-1}(X(N_{G}(P)))\,.$$
However, we point out that even in the $2$-nilpotent case  
(see Lemma~\ref{lem:TGKlein}(d))
it may happen that the Green correspondence $f^{-1}$ 
does not preserve endo-trivial modules, that is  
$TT(G)\lneqq f^{-1}(X(N_{G}(P)))$.
The following are typical examples of such situations.

\begin{ex}\label{2CoreM(3)}
Let $P$ be a Klein four-group with generators $u$ and $v$, that is 
$\langle u\rangle\times\langle v\rangle \cong C_2\times C_2$. 
Let $\ell$ be an odd prime, and $H$ denote an 
extra-special group of order $\ell^{3}$ and exponent $\ell$, 
given by the presentation 
$$H = \langle  a,b,c \, | \, a^{\ell}=b^{\ell}=c^{\ell}=1, 
[a,c]=1=[b,c],[a,b]=c\rangle\,.$$
Then build $G := H\rtimes P$ to be the semi-direct 
product where $P$ acts on $H$ as follows: $v$ 
acts trivially, that is $[H,v] = 1$,   $a^{u}= a^{-1}$, $b^u = b^{-1}$, 
$c^u = c$.  
We have $H = O_{2'}(G)$ and 
$N_{G}(P)=C_{G}(P)=Z\times P$ where 
$Z:=<c>\cong C_{\ell}$ is the center of $H$.\par
By Lemma~\ref{lem:TGKlein}(d), 
$T(G)\cong X(G)\oplus\IZ$ and, 
by Lemma~\ref{lem:basicET}(e), it embeds as a subgroup of  
$T(N_{G}(P))\cong X(N_{G}(P))\oplus \IZ$ via the restriction map
$\Res^{G}_{N_{G}(P)}:T(G)\lra T(N_{G}(P))$. Now 
$$X(G)=(G^{ab})_{2'}=  \left[   \left ((H)^{ab} \right)_{P}
\times P^{ab}  \right]_{2'}= (H/Z)_{P}=<aZ,bZ>_{P}=1$$
whereas  
$X(N_{G}(P))=(N_{G}(P)^{ab})_{2'}=Z_{P}=Z\cong C_{\ell}\,.$ 
(The subscript $P$ denotes taking the co-invariants 
with respect to the action of $P$, that is for a group 
$U$ which is a $P$-set then  
$U_{P}:=U/\!<\!u^{p}u^{-1}\,|\,u\in U,p\in P\!>$). 
In consequence $TT(G)\lneqq TT(N_{G}(P))
\cong f^{-1}(X(N_{G}(P)))$.
\end{ex}

\begin{ex}\label{CounterEx_2Nil}
Let us reconsider Example \ref{2CoreM(3)} and restrict ourselves
to the situation $\ell := 3$, and let $P$, $u$, $v$ and $G$ be the same
as in Example \ref{2CoreM(3)}. 
Set $H := 3_+^{1+2}$, namely, $H$ is an extra-special
group of order $3^3$ with exponent $3$, see \cite[p.xx]{Atlas}.
Then, $G$ is $2$-nilpotent and $H = O_{2'}(G)$. Set 
$Z := Z(H)$ and $N := N_G(P)$, so that $Z \cong C_3$ and
$$
G = (H\rtimes\langle u\rangle)\times\langle v\rangle
  = H \rtimes P = C_G(v), \qquad\qquad
N = Z \times P.
$$ 
Set $Z := \langle z \rangle$.
There exist exactly two characters in ${\mathrm{Irr}}(H)$ with
degree $3$. Take one of them and denote it by 
$\theta\in{\mathrm{Irr}}(H)$, namely $\theta(1) = 3$.
Since $z^u = z$ and $z^v = z$, we have $T_G(\theta) = G$,
where $T_G(\theta)$ is the inertial group of $\theta$ in $G$. 
Since $G$ is $2$-nilpotent, it follows from
Morita's theorem \cite[Theorem 2]{Morita1951} (see
\cite[Lemma 2]{Koshitani1982}) and
\cite{FongGaschuetz1961} that
there is a block $B$ of $\mathcal OG$ with
$$
   kB \ \cong \ {\mathrm{Mat}}_{\theta(1)}(kP) \ = \
                {\mathrm{Mat}}_3(kP),
   \qquad \text{as }k\text{-algebras}.
$$
Namely, $\theta$ is covered by $B$, and $B$ is a non-principal
block with defect group $P$. Let $b$ be a block of $\mathcal ON$
with $b^G = B$.
Since $N = Z\times P$ and this is abelian, $kb \cong kP$
as $k$-algebras. Hence, the unique simple $kN$-module $W$ 
in $b$ is of dimension one.
Set $V := f^{-1}(W)$.
Note that $\mathcal OP$ is a source algebra of $B$ at least 
by making use of
\cite[Theorem 1.1]{CravenEatonKessarLinckelmann}.
Hence, by the proof of Case 1 of Proposition~\ref{IrrChar}(a),
$V$ is the unique simple $kG$-module in $B$.
Set $\chi := \chi_{\hat V}$.
Then, by Proposition~\ref{IrrChar}(a), $\chi\in{\mathrm{Irr}}(B)$.
It follows from  the character table of $G$  that
$\chi(1) = 3$, $\chi(u) = 1$, $\chi(v) = 3$ and $\chi(uv) = 1$.
Then, it follows from Theorem \ref{prop:torchar} that
$V$ is not endo-trivial 
(or since $\chi(1) = \dim_k(V) \not\equiv 1$ (mod $4$)
it follows from Lemma \ref{lem:basicET}(h) that $V$ is not endo-trivial).

We also obtain the same conclusion using Theorem \ref{C2xC2new}. Namely,
since $G = C_G(v)$, we get $\phi_v = \phi$ where $\phi$ is 
the unique irreducible Brauer character in $B$ and $\phi_v$ is the
same as in Lemma \ref{IrrChar}(b). Since 
$kB \cong {\mathrm{Mat}}_3(kP)$, $\phi_v(1) = \phi(1) = 3$, which 
means $\phi_v(1) \not= 1$. Therefore, Theorem \ref{C2xC2new}
yields that $V$ is not endo-trivial,
 
As a matter of fact, $kG$ has the following block decomposition:
$$
kG \cong kP \oplus 4\times{\mathrm{Mat}}_2(k\langle v\rangle)
            \oplus 2\times{\mathrm{Mat}}_3(kP)
\qquad \text{as }k\text{-algebras},
$$
by making use of \cite[Theorem 2]{Morita1951}
(see \cite[Lemma2]{Koshitani1982}) and \cite{FongGaschuetz1961}.
\end{ex}

\begin{ex}\label{CounterEx}
We now give an example of a finite group $G$,  
which is not $2$-solvable, has a Sylow $2$-subgroup 
$P\cong C_{2}\times C_{2}$, and such that there exists 
a non-trivial one-dimensional $kN_{G}(P)$-module $1a$ 
such that its $kG$-Green correspondent $f^{-1}(1a)$ is 
{\it not} endo-trivial.\par
We rely here on  computations using the algebra software {\sf MAGMA} \cite{MAGMA}.
We know from the {\sf ATLAS} \cite[p.207]{Atlas}
that the sporadic simple group $(Fi_{24})'$, 
which is the commutator subgroup of 
the  Fischer  group 
$Fi_{24}$, has a maximal subgroup $N(3B)$ with
$N(3B) \cong 3_+^{1+10}:(U_5(2):2)$. Then, we compute that
$N(3B)$ has a maximal subgroup $G$ such that
$|N(3B):G| = 2^8{\cdot}3^4 = 20\,736$, 
$|G| = 2^2{\cdot}3^{12}{\cdot}5{\cdot}11\, = 116 \, 917\, 020$, 
and has $G$ a Sylow $2$-subgroup $P$ with $P \cong C_2\times C_2$.
Furthermore, $|N_G(P)| = 2^2{\cdot}3^6$ and $|C_G(P)| = 2^2{\cdot}3^5$.\par   
Using {\sf MAGMA} to compute the character tables of 
$G$ and $N:=N_{G}(P)$, we gather the following information. 
On the one hand,  we know that $G$ has  exactly 
three irreducible characters 
$\chi_{1},\chi_{2},\chi_{3}\in {\mathrm{Irr}}(G)$ such that 
$\chi_{1}(u)=\chi_{2}(u) =\chi_{3}(u)= 1$ for any $u\in P-\{1\}$. 
Therefore we conclude from 
Theorem~\ref{C2xC2new}(a), Corollary~\ref{etSuffCond}(a), 
and Theorem~\ref{prop:torchar} that $\chi_{1},\chi_{2},\chi_{3}$ 
are the characters afforded by three non-isomorphic trivial 
source endo-trivial modules $V_{1}:=k_{G}$, $V_{2}$, $V_{3}$ 
in the principal block  $B:=B_0(G)$ of $G$, and moreover that 
these are the only trivial source endo-trivial modules of $kG$, 
up to isomorphism. 
On the other hand, there are exactly twenty-seven non-isomorphic
simple $kN$-modules, say $1_1 := k_N$, $1_2$, $\cdots$, $1_{27}$ 
and $X(N)\cong \IZ /3\IZ\oplus\IZ /3\IZ\oplus\IZ /3\IZ$.
So, we have twenty-seven non-isomorphic indecomposable
trivial source $kG$-modules $f^{-1}(1_i)$ with vertex $P$
for $i = 1, \cdots, 27$.

More precisely, the three non-isomorphic
endo-trivial $kB$-modules $V_{1}$, $V_{2}$ and $V_{3}$ are all simple, 
and their $k$-dimensions
are $1$, $5$, $5$, so that the principal block $B$  is Puig equivalent
to $\mathcal O\mathfrak A_4$.
Moreover, by Brauer's 1st Main Theorem and the
character table of $G$, we know that there are exactly eight other blocks, 
say $B_2, \cdots, B_9$, of $G$
with full defect and all of them have $kG$-modules
of the form $f^{-1}(1a)$, that is the Green correspondents
of one-dimensional $kN$-modules, but none of these are endo-trivial modules.\par
As a result, this example shows that
$X(G) \cong \{0\}$, $TT(G) \cong \IZ /3\IZ$
and $X(N_G(P))\cong \IZ /3\IZ\oplus\IZ /3\IZ\oplus\IZ /3\IZ$.\par
A similar example can be obtained from a maximal subgroup $H$ of 
$G$ of order $481\,140=2^{2}\cdot3^{7}\cdot5\cdot 11$ in which 
case we obtain $X(G)\cong \IZ/3\IZ$, $TT(G) \cong \IZ /3\IZ\oplus\IZ /3\IZ$ 
and $X(N_H(P))\cong \IZ /3\IZ\oplus\IZ /3\IZ\oplus\IZ /3\IZ$.
\end{ex}

\section{Green correspondents of one dimensional module over $N_G(P)$}

\begin{lem}\label{GreenCorrA5}
Let $G := \mathfrak A_5$ and let 
$B_0 := B_0(\mathcal O\mathfrak A_5)$
be the principal block algebra of $\mathcal O\mathfrak A_5$.
Then,  the indecomposable trivial source $kG$-modules in $B_0$
with vertex $P$ are $k_G$ and two uniserial modules
$$
    \boxed{\begin{matrix} 2a  \\
                          k_G \\
                          2b
           \end{matrix}
          }\,,
\qquad
    \boxed{\begin{matrix} 2b  \\
                          k_G \\
                          2a
           \end{matrix}
          }\,,
$$
where $2a, 2b$ are nonisomorphic simple $kG$-modules in $B_0$
of $k$-dimension two. These three indecomposable modules 
lift uniquely to trivial source
$\mathcal OG$-lattices which afford ordinary characters 
$1_{G}$, $\chi_5$ and $\chi_5$, respectively, 
where  $\chi_5\in\Irr(B_0)$
is of degree five.
\end{lem}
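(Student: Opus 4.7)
My plan proceeds in three stages: first I count the indecomposable trivial source $kG$-modules in $B_0$ with vertex $P$ via Green correspondence; then I identify the non-trivial ones as induced modules and compute their lifts to characteristic zero; finally I pin down their Loewy structure.

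For the first stage, $N = N_G(P) \cong \fA_4$ has a unique $2$-block (the whole group algebra $k\fA_4$), which is the Brauer correspondent of $B_0$, so Green correspondence restricts to a bijection between the isomorphism classes of indecomposable trivial source $kG$-modules in $B_0$ with vertex $P$ and those of $kN$-modules with vertex $P$. The latter are precisely the three one-dimensional $kN$-modules $k_N$, $k_\omega$ and $k_{\bar\omega}$ (the indecomposable summands of $\Ind_P^N(k_P)$). Hence there are exactly three such $kG$-modules: $k_G = f^{-1}(k_N)$, together with $V := f^{-1}(k_\omega)$ and $V' := f^{-1}(k_{\bar\omega})$.

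Next I would show $V \cong \Ind_N^G(k_\omega)$, of $k$-dimension $5$, lifting to an $\mathcal{O}G$-lattice affording $\chi_5$. The two $(N,N)$-double cosets in $G$ have sizes $12$ and $48$; for a representative $g$ of the non-trivial one, $N \cap N^g \cong C_3$ and $\lconj{g}{k_\omega}|_{C_3} \cong k_{\bar\omega}|_{C_3}$. Mackey's formula then yields $\Res_N^G \Ind_N^G(k_\omega) \cong k_\omega \oplus P(k_{\bar\omega})$, the second summand being the $k\fA_4$-projective cover of $k_{\bar\omega}$ (as $\Ind_{C_3}^N$ of a non-trivial character of $C_3$). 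By Frobenius reciprocity, $\End_{kG}(\Ind_N^G k_\omega) \cong k$, so the induced module is indecomposable and equals its unique vertex-$P$ summand $V$. Lifting to $\mathcal{O}$, the character $\Ind_N^G(\omega)$ is irreducible of degree $5$ by Mackey's irreducibility criterion (since $\omega$ and $\lconj{g}{\omega} = \bar\omega$ are distinct on $C_3$), hence equals $\chi_5$. Trivial source modules lift uniquely, so this is the unique lift of $V$; the analogous statement for $V'$ follows by the symmetry $\omega \leftrightarrow \bar\omega$.

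For the Loewy structure, the decomposition $\chi_5 \equiv \phi_1 + \phi_2 + \phi_3 \pmod{\mathfrak{p}}$ forces $V$ to have composition factors $k_G, 2a, 2b$ each with multiplicity one. Using the isomorphism $G \cong \PSL_2(4)$ under which $N$ becomes a Borel subgroup, the simples $2a, 2b$ identify with the natural $\SL_2$-module and its Frobenius twist; each restricts to $N$ as a non-split uniserial extension whose top and socle are distinct characters of the torus $C_3 \subset N$, lying in $\{k_\omega, k_{\bar\omega}\}$ and interchanged between $2a$ and $2b$. For a suitable labeling one has $\mathrm{soc}((2a)|_N) = k_\omega$ and $\mathrm{soc}((2b)|_N) = k_{\bar\omega}$, so Frobenius reciprocity gives $\Hom_{kG}(V, 2a) = k$ and $\Hom_{kG}(V, 2b) = \Hom_{kG}(V, k_G) = 0$, hence $V$ has simple top $2a$; dually, using $V^* \cong V'$ and the self-duality of the simples, $V$ has simple socle $2b$. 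With only three composition factors and non-isomorphic simple top and socle, $V$ is forced to be uniserial of Loewy length exactly $3$ with middle layer $k_G$, as claimed, and the structure of $V'$ follows by the symmetry $\omega \leftrightarrow \bar\omega$. The hardest step will be establishing the uniserial restrictions $(2a)|_N$ and $(2b)|_N$ with correctly identified socles: the cleanest route uses the $\PSL_2(4)$ identification and the weight theory for the natural $\SL_2$-module under $C_3 \subset N$.
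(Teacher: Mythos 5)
Your proof is correct, and in fact considerably more detailed than the paper's, which consists of a single line: the result is said to follow from ``direct calculations'' of the three Green correspondents together with a reference to \cite[Chap.4, Problem 10]{NagaoTsushima} (the latter giving the bijection, compatible with blocks and vertices, between trivial source $kG$-modules with vertex $P$ and simple $k\fA_4$-modules). What you have written out is precisely the content of that ``direct calculation,'' organised into two clean steps: Mackey decomposition plus Frobenius reciprocity to identify $V=\Ind_N^G(k_\omega)$ as an indecomposable $5$-dimensional trivial source module affording $\chi_5$, and the $\PSL_2(4)$ identification together with the weight theory for the natural $\SL_2$-module to pin down the top and socle, hence the uniserial Loewy structure. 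Both steps are sound: the double-coset count ($12+48$, intersection $C_3$), the twist $\lconj{g}{k_\omega}|_{C_3}\cong k_{\bar\omega}|_{C_3}$, the identification of $\Ind_{C_3}^N$ of a nontrivial character with a projective cover, the $\End$-ring computation forcing indecomposability, the application of Mackey's irreducibility criterion over $\mathcal O$, and the socle computation via $V^U$ all check out. One small remark: the final step of the Loewy analysis can be shortened once you know $\mathrm{top}(V)=2a$ is simple and $\dim_k V=5$ with composition factors $2a,k_G,2b$ each once; since $V$ is self-dual up to the $\omega\leftrightarrow\bar\omega$ swap and $k_G$ lies in neither the top nor the socle (both checked by Frobenius reciprocity), the uniserial structure $2a/k_G/2b$ is immediate. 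But this is a matter of economy, not correctness; the argument as written is complete.
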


\begin{proof}
This follows from direct calculations of the Green correspondents 
$f^{-1}(1a)$ of three simple $kN_G(P)$-modules $1a$
of $k$-dimension one (recall that $N_G(P) = \mathfrak A_4$).
Then, the assertions follow by 
\cite[Chap.4, Problem 10]{NagaoTsushima}.
\end{proof}

\begin{lem}\label{GenDecNumber}
Let $B$ be a block of $\mathcal OG$ with a defect group 
$P \cong C_2\times C_2$ such that $B$ is one of $\mathcal OP$,
$\mathcal O\mathfrak A_4$ or $B_0(\mathcal O\mathfrak A_5)$,
and hence $G$ is $P$, $\mathfrak A_4$ or $\mathfrak A_5$, respectively. 
Assume $\chi\in{\mathrm{Irr}}(B)$ such that
\begin{equation*}
\chi \ \in \ 
\begin{cases}
\{1_P\}, & \text{if } B = \mathcal OP, \\
\{ 1_{\mathfrak A_4}, \chi_{\omega}, \chi_{\bar\omega} \},
     & \text{if } B = \mathcal O\mathfrak A_4, \\
\{ 1_{\mathfrak A_5}, \chi_5 \},
     & \text{if } B = B_0(\mathcal O\mathfrak A_5),
\end{cases}
\end{equation*}
where $\chi_{\omega}$ and $ \chi_{\bar\omega}$ are two different
non-trivial ordinary characters of $\mathfrak A_4$ of degree one, and
$\chi_5$ is the same as in Lemma {\rm\ref{GreenCorrA5}}.
Then, for any $u\in P$ with $u \not= 1$, it holds
$$
d^{u}_{\chi,\phi_u} \ = \ 1
$$
where $\phi_u$ is an irreducible Brauer character of $C_G(u)$ such
that ${B_u}^G = B$ for a block $B_u$ of $C_G(u)$ which satisfies
${\mathrm{IBr}}(B_u) = \{\phi_u\}$ (note that $B_u$ is nilpotent).
\end{lem}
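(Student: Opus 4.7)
The plan is to unwind the definition of the generalized decomposition numbers and then read the value $d^u_{\chi,\phi_u}$ directly from character-table data. Recall that by definition
$$
\chi(us) \ = \ \sum_{\phi\in\mathrm{IBr}(C_G(u))} d^u_{\chi,\phi}\,\phi(s)
$$
for every $2$-regular element $s\in C_G(u)$, and I intend to evaluate this identity at $s=1$.

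First I would identify the centralizer $C_G(u)$ in each of the three cases and note that it always coincides with $P$. For $G=P$ this is trivial; for $G=\fA_4$ the centralizer of any involution is precisely the Klein-four normal subgroup $V_4=P$; and for $G=\fA_5$ the conjugacy class of double transpositions has size $15$, so that $|C_G(u)|=60/15=4=|P|$. Since $C_G(u)=P$ is a $2$-group, its group algebra $\cO P$ is its unique (principal) block, which is nilpotent; and since $B$ is in every case the principal block of $\cO G$, this block is exactly the unique $B_u$ satisfying $B_u^{\,G}=B$. Hence its only irreducible Brauer character is the trivial one, in particular $\phi_u(1)=1$, and the sole $2$-regular element of $C_G(u)=P$ is the identity.

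Substituting $s=1$ therefore collapses the displayed sum to a single term and yields
$$
d^u_{\chi,\phi_u} \ = \ d^u_{\chi,\phi_u}\cdot\phi_u(1) \ = \ \chi(u),
$$
so it remains only to check that $\chi(u)=1$ for each listed character. This is trivial for $\chi=1_P$. For the three linear characters in $\Irr(\cO\fA_4)$ it follows from the fact that each of them has $[\fA_4,\fA_4]=V_4=P$ in its kernel, hence is identically $1$ on $u$. Finally, in the $\fA_5$-case, $1_{\fA_5}(u)=1$ is trivial and $\chi_5(u)=1$ is immediate from the character table of $\fA_5$, where $\chi_5$ takes the value $1$ on the class of double transpositions. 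The whole argument is essentially bookkeeping around the defining formula for $d^u_{\chi,\phi_u}$: the only genuine inputs are the three centralizer computations and the single character-table value $\chi_5((12)(34))=1$, so there is no real obstacle to carrying the proof through.
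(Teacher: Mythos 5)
Your argument is correct. The centralizer computations are right in all three cases ($C_G(u)=P$ for $G=P$, $\fA_4$, $\fA_5$, since the class of $u$ has size $1$, $3$, $15$ respectively), so $\mathcal{O}C_G(u)=\mathcal{O}P$ is the unique block of the centralizer, it is nilpotent, its unique irreducible Brauer character is $1_P$, and it induces to the principal block $B$ (Brauer's third main theorem; note $\mathcal{O}\fA_4$ and $\mathcal{O}P$ are themselves principal). Evaluating the defining relation $\chi(us)=\sum_{\phi}d^u_{\chi,\phi}\phi(s)$ at $s=1$, where the sum has a single term because $P$ has only one $2$-regular class, gives $d^u_{\chi,\phi_u}=\chi(u)$, and the listed characters all take the value $1$ at $u$ (the linear characters of $\fA_4$ contain $V_4$ in their kernel, and $\chi_5((12)(34))=1$ from the $\fA_5$ character table).

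Your route differs slightly from the paper's: the paper simply exhibits the full generalized $2$-decomposition matrices of $\mathcal{O}\fA_4$ and $B_0(\mathcal{O}\fA_5)$ (all four columns, including those indexed by the nontrivial Brauer characters of $G$ itself) and reads off the entry $d^u_{\chi,\phi_u}$, whereas you reduce the generalized decomposition number to the ordinary character value $\chi(u)$ by observing that $\mathrm{IBr}(C_G(u))$ is a singleton, and then use only ordinary character-table data. Your reduction is the more self-contained and checkable derivation (it explains \emph{why} the relevant column is just the column of values $\chi(u)$), while the paper's tables carry more information, e.g.\ the values $-1$ for the remaining characters $\chi_3$, $\chi_{3a}$, $\chi_{3b}$, which make visible that the conclusion genuinely depends on restricting $\chi$ to the stated lists. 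Both are direct calculations on the same three small blocks, so the difference is one of presentation rather than substance.
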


\begin{proof}
For the case $B = \mathcal OP$, the claim is obvious.

Next assume that $B = \mathcal O\mathfrak A_4$. Then the generalized
$2$-decomposition matrix is the following:
\begin{center}
{\rm
\begin{tabular}{l|cccr}
  & $\phi_1$ & $\phi_{\omega}$ & $\phi_{\bar\omega}$ & $\phi_u$ \\
\hline
$\chi_1$            & $1$ & .   & .   & $1$  \\
$\chi_{\omega}$     & .   & $1$ & .   & $1$  \\
$\chi_{\bar\omega}$ & .   & .   & $1$ & $1$  \\ 
$\chi_{3}$          & $1$ & $1$ & $1$ & $-1$ \\ 
\end{tabular} 
}
\end{center}
where ${\mathrm{IBr}}(B) = \{ \phi_1, \phi_{\omega}, \phi_{\bar\omega} \}$,
${\mathrm{Irr}}(B) = 
\{ \chi_1, \chi_{\omega}, \chi_{\bar\omega}, \chi_3 \}$
and ${\mathrm{IBr}}(B_u) = \{\phi_u\}$.
Hence the assertion holds.

Suppose that $B = B_0(\mathcal O\mathfrak A_5)$. Then, the generalized
$2$-decomposition matrix is
\begin{center}
{\rm
\begin{tabular}{l|cccr}
  & $\phi_1$ & $\phi_{2a}$ & $\phi_{2b}$ & $\phi_u$ \\
\hline
$\chi_1$            & $1$ & .   & .   & $1$  \\
$\chi_{3a}$         & $1$ & $1$ & .   & $-1$  \\
$\chi_{3b}$         & $1$ & .   & $1$ & $-1$  \\ 
$\chi_{5}$          & $1$ & $1$ & $1$ & $ 1$ \\ 
\end{tabular} 
}
\end{center}
where ${\mathrm{IBr}}(B) = \{ \phi_1, \phi_{2a}, \phi_{2b} \}$,
${\mathrm{Irr}}(B) = 
\{ \chi_1, \chi_{3a}, \chi_{3b}, \chi_5 \}$
and ${\mathrm{IBr}}(B_u) = \{\phi_u\}$.
Thus, this implies the assertion.
\end{proof}

\begin{prop}\label{IrrChar}
Let $G$ be a finite group with a Sylow $2$-subgroup $P$ such that
$P \cong C_2 \times C_2$, and set $N := N_G(P)$.
Let $V := f^{-1}(1a)$ be the $kG$-Green correspondent of a 
one-dimensional $kN$-module $1a$,  and set $\chi := \chi_{\hat V}$.
Then, the following holds:
\begin{enumerate}
\renewcommand{\labelenumi}{\rm{(\alph{enumi})}}
    \item
$\chi$ is irreducible, namely, $\chi\in{\mathrm{Irr}}(G)$.
   \item
Let $b$ be the block of $\mathcal ON$ to which $1a$  belongs,
and set $B := b^G$ (block induction).
Then, for any $u\in P$ with $u \not= 1$, 
and for any block $B_u$ of $\mathcal OC_G(u)$
with ${B_u}^G = B$, we have that $B_u$ is nilpotent and that
$$
          d^{u}_{\chi, \phi_u} \ = \ 1
$$
where $\phi_u$ is the unique irreducible Brauer character in $B_u$,
that is ${\mathrm{IBr}}(B_u) = \{ \phi_u \}$.
\end{enumerate}
\end{prop}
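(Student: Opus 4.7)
The plan is to reduce the statement to three explicit reference cases by invoking the Craven--Eaton--Kessar--Linckelmann classification \cite[Theorem 1.1]{CravenEatonKessarLinckelmann}: every $2$-block with Klein-four defect groups is Puig equivalent to one of $\mathcal{O}P$, $\mathcal{O}\fA_{4}$, or $B_{0}(\mathcal{O}\fA_{5})$. First, fix such a Puig equivalence between $B$ and a reference block $B'$ inside $\mathcal{O}G'$, where $G'\in\{P,\fA_{4},\fA_{5}\}$, realized by a bimodule pair $(\mathfrak{M},\mathfrak{N})$ as in Lemma~\ref{Puig_tsm}. Since $\dim_{k}(1a)=1$, the $kN$-module $1a$ has vertex $P$, and hence so does $V=f^{-1}(1a)$. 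Lemma~\ref{Puig_tsm} then transports $V$ to the indecomposable trivial source $kG'$-module $V':=k\mathfrak{N}\otimes_{kG}V$ lying in $B'$, still with vertex $P$; by Green correspondence inside $G'$, $V'$ is the Green correspondent of some one-dimensional $kN_{G'}(P)$-module.

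For part (a), $V'$ is either itself one-dimensional (when $G'=P$ or $\fA_{4}$, where all simple modules in the reference block are of dimension one) or one of the three modules explicitly listed in Lemma~\ref{GreenCorrA5} (when $G'=\fA_{5}$). In each case $V'$ lifts uniquely to an $\mathcal{O}G'$-lattice $\widehat{V'}$ whose ordinary character belongs to $\Irr(B')$. Now applying Lemma~\ref{Mathcal_O_K} to $K\otimes_{\mathcal{O}}\mathfrak{M}$ and $K\otimes_{\mathcal{O}}\mathfrak{N}$ yields a Morita equivalence between $KB$ and $KB'$ which sends $K\hat{V}$ to $K\widehat{V'}$; irreducibility is preserved, so $\chi=\chi_{\hat{V}}\in\Irr(B)$.

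For part (b), the nilpotency of $B_{u}$ also transfers through the Puig equivalence. A Puig equivalence $B\sim_{\mathrm{Puig}}B'$ induces, via the source-algebra formalism, a Puig equivalence between the blocks $B_{u}$ and $B'_{u}$ of $\mathcal{O}C_{G}(u)$ and $\mathcal{O}C_{G'}(u)$ satisfying ${B_{u}}^{G}=B$ and ${B'_{u}}^{G'}=B'$; this is standard (see \cite{Puig1999, Linckelmann2001}). For each of the three reference groups, direct inspection of the subgroup structure and the associated Brauer pairs shows that $B'_{u}$ has defect group either $\langle u\rangle\cong C_{2}$ or $P$ and trivial inertial quotient, hence is nilpotent with a unique irreducible Brauer character $\phi'_{u}$. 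These properties transfer through the equivalence to $B_{u}$ and $\phi_{u}$.

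Finally, because Puig equivalences are \emph{splendid} Morita equivalences, they preserve generalized decomposition numbers relative to compatible Brauer pairs. Hence
\[
d^{u}_{\chi,\phi_{u}} \;=\; d^{u}_{\chi_{\widehat{V'}},\phi'_{u}} \;=\; 1,
\]
the second equality being precisely the content of Lemma~\ref{GenDecNumber}. The main obstacle is the bookkeeping: making explicit how a fixed Puig equivalence between $B$ and $B'$ induces matching Puig equivalences on centralizer blocks together with a compatible identification of irreducible (Brauer) characters, so that the equality $d^{u}_{\chi,\phi_{u}}=d^{u}_{\chi_{\widehat{V'}},\phi'_{u}}$ holds on the nose. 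The source algebra formalism encodes this cleanly, but spelling it out in the present setting requires care.
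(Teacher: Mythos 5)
Your proposal is correct and follows essentially the same route as the paper: reduction via the Craven--Eaton--Kessar--Linckelmann classification, transport of $V$ through the Puig equivalence (Lemmas~\ref{Mathcal_O_K} and~\ref{Puig_tsm}) to identify $\chi$ with an irreducible character of the reference block for (a), and preservation of generalized decomposition numbers under Puig equivalence combined with Lemma~\ref{GenDecNumber} for (b). The only minor differences are that the paper obtains nilpotency of $B_u$ directly from the $2$-nilpotency of $C_G(u)$ rather than transferring it through the equivalence, and it settles the ``bookkeeping'' issue you flag by invoking \cite[Proposition (43.10)]{Thevenaz}.
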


\begin{proof}
(a)
Let $b$ and $B$ be as in (b) above.
Clearly, $P$ is a defect group of $b$ and hence of $B$ as well.
It is known that $V$ belongs to $B$, see
\cite[Chap.5, Corollary 3.11]{NagaoTsushima}.
Recall that all indecomposable trivial source $kG$-modules in $B$
with vertex $P$ are the Green correspondents of simple $kN$-modules 
in $b$, see \cite[Chap.4, Problem 10]{NagaoTsushima} and
\cite[II Lemma 10.3]{Landrock1983}.

Now, it follows from \cite[Theorem 1.1]{CravenEatonKessarLinckelmann}
that $B$ is Puig equivalent to one of the three block algebras
$B_0(\mathcal O\mathfrak A_5)$, 
$\mathcal O\mathfrak A_4$ or $\mathcal OP$,
where $B_0(\mathcal   O\mathfrak A_5)$ is the principal block algebra of
$\mathcal O\mathfrak A_5$. 
Let $G'$ be one of the three groups $\mathfrak A_5$, $\mathfrak A_4$
or $P$, and let $B'$ be one of the three block algebras 
$B_0(\mathcal O\mathfrak A_5)$, $\mathcal O\mathfrak A_4$ or
$\mathcal OP$, depending on the situation.
Since $1a$ is of dimension one, $1a$ is an
indecomposable trivial source $kN$-module with vertex $P$.

Since $B$ and $B'$ are Puig equivalent,
there exist an $(\mathcal OG, \mathcal OG')$-lattice $\mathfrak M$ 
and an $(\mathcal OG', \mathcal OG)$-lattice $\mathfrak N$ 
such that $\mathfrak N = {\mathfrak M}^\vee$ ($\mathcal O$-dual of 
$\mathfrak M$), 
$\mathfrak M$ is an indecomposable trivial source
lattice with vertex $\Delta P$ as an $\mathcal O[G\times G']$-lattice
and the pair $(\mathfrak M, \mathfrak N)$ induces 
a Morita equivalence between $B$ and $B'$.

Note that the $(RB, RB')$-bimodule $R\mathfrak M$ induces
a Morita equivalence between $RB$ and $RB'$
for $R\in\{ K,k\}$,
see \cite[Exercise (9.7)]{Thevenaz}. 

{\bf Case 1:} $B' = \mathcal OP$. Hence $G' = P$ in this case.
Recall that $\mathfrak M$ takes a trivial source simple module to
a trivial source simple module with the same vertex by Lemma \ref{Puig_tsm}.
Let $S$ be the unique simple $kG$-module in $B$.
Then, clearly, $k\mathfrak N\otimes_{kP}k_P = S$.
So, $S$ is a trivial source simple $kG$-module in $B$ with
vertex $P$ as remarked above. 
Thus, $S = f^{-1}(1a)$ since $f^{-1}(1a)$
is the unique indecomposable trivial source $kG$-module in $B$ with
vertex $P$ by Lemma \ref{Puig_tsm}.
Namely, we may assume $V = S$.
Now, just as in Lemma \ref{Mathcal_O_K},
$$
k\otimes_{\mathcal O}(\mathfrak N\otimes_{\mathcal OG}\hat V)
=
(k\otimes_{\mathcal O}\mathfrak N)\otimes_{kG}
(k\otimes_{\mathcal O}\hat V)
=
k\mathfrak N\otimes_{kG}V = k_P = k\otimes_{\mathcal O}\mathcal O_P.
$$
Because of the uniqueness of the lifting from $k$ to $\mathcal O$
for trivial source modules
(see \cite[Chap.4, Theorem 8.9(iii)]{NagaoTsushima}), 
$\mathfrak N\otimes_{\mathcal OG}\hat V = \mathcal O_P$.
This implies that
$$
K\otimes(\mathfrak N\otimes{\mathcal OG}\hat V)
=
K\otimes_{\mathcal O}\mathcal O_P = K_P =: \chi_{1_P}.
$$
Since 
$
K\otimes_{\mathcal O}(\mathfrak M\otimes_{\mathcal OP}\mathfrak N)
\cong
(K\otimes_{\mathcal O}\mathfrak M)\otimes_{KP}
     (K\otimes_{\mathcal O}\mathfrak N)
$
as $(KG,KG)$-bimodules by Lemma \ref{Mathcal_O_K},
these yield 
\begin{align*}
\chi_{\hat V} 
&:= 
K\otimes_{\mathcal O}\hat V
=
K\otimes_{\mathcal O}((\mathfrak M\otimes_{\mathcal OP}\mathfrak N)
          \otimes_{\mathcal OG}\hat V )
\\
&=
(K\otimes_{\mathcal O}\mathfrak M)\otimes_{KP}
 (K\otimes_{\mathcal O}(\mathfrak N\otimes_{\mathcal OG}\hat V))
= K\mathfrak M\otimes_{KP}K_P.
\end{align*}
Therefore, the ordinary character $\chi_{\hat V}$ of $G$ afforded by $V$ is
irreducible, see \cite[Exercise (9.7)]{Thevenaz}.

{\bf Case 2:} $B' = \mathcal O\mathfrak A_4$. 
Hence $G' = \mathfrak A_4$ in this case.
First, since $P \trianglelefteq \mathfrak A_4$,
all indecomposable trivial source $k\mathfrak A_4$-modules
with vertex $P$ are the three simple $k\mathfrak A_4$-modules
of $k$-dimension one, which we denote by $k_{\mathfrak A_4}$, $k_{\omega}$
and $k_{\bar\omega}$.
Thus, as in Case 1, all the indecomposable trivial source
$kG$-modules in $B$ with vertex $P$ are exactly
$k\mathfrak M\otimes_{k\mathfrak A_4}k_{\mathfrak A_4}$,
$k\mathfrak M\otimes_{k\mathfrak A_4}k_{\omega}$, and
$k\mathfrak M\otimes_{k\mathfrak A_4}k_{\bar\omega}$.
In fact, these are all the simple $kG$-modules in $B$ as well. 

Now, since $|N_G(P)/C_G(P)| = 3$, $b$ is also Puig equivalent to
$\mathcal O\mathfrak A_4$ by looking at the $2$-decomposition 
matrices of $B_0(\mathcal O\mathfrak A_5)$, 
$\mathcal O\mathfrak A_4$ and $\mathcal OP$. Namely, $b$ has
exactly three simple $kN$-modules, which we denote by
$W =: W_0$, $W_1$ and $W_2$. As remarked above, all indecomposable
trivial source $kG$-modules in $B$ with vertex $P$ are
$f^{-1}(W_i)$ for $i = 0,1,2$. Thus it holds
\begin{align*}
{\mathrm{IBr}}(B) 
&= 
\{ 
k\mathfrak M\otimes_{k\mathfrak A_4}k_{\mathfrak A_4}, \
k\mathfrak M\otimes_{k\mathfrak A_4}k_{\omega}, \
k\mathfrak M\otimes_{k\mathfrak A_4}k_{\bar\omega}
\}
\\
&=
\{ f^{-1}(W), \ f^{-1}(W_1), \ f^{-1}(W_2)    \}
\end{align*}
as sets. 
Since all simple $k\mathfrak A_4$-modules are
liftable as we know by looking at the $2$-decomposition matrix of 
$\mathfrak A_4$, it holds that all the three simple $kG$-modules
in $B$ are trivial source modules with vertex $P$,
and they uniquely lift from
$k$ to $\mathcal O$, and these afford irreducible ordinary
characters, as in Case~1. 

{\bf Case 3:} $B' = B_0(\mathcal O\mathfrak A_5)$. 
Hence $G' = \mathfrak A_5$ in this case.
Recall that $N_{\mathfrak A_5}(P) = \mathfrak A_4$
by looking at the $2$-decomposition matrices. 
Since $3 = \ell (B) = \ell (b)$, and since $P \trianglelefteq N$,
$b$ is Puig equivalent to $\mathcal O\mathfrak A_4$.
So, we can set 
${\mathrm{IBr}}(b) := \{ W =: W_0, W_1, W_2 \}$.
Obviously, $W_i$ for $i = 0,1,2$ are the all indecomposable 
trivial source $kN$-modules in $b$ with vertex $P$.
Thus, similarly to the previous argument, all the indecomposable trivial source
$kG$-modules in $B$ with vertex $P$ are
$f^{-1}(W_i)$ for $i = 0,1,2$.

On the other hand, all the indecomposable trivial source 
modules in $B_0(\mathcal O\mathfrak A_5)$ with vertex $P$ are
$k_{\mathfrak A_5}$, $5a$ and $5b$, where $5a$ and $5b$ are
the uniserial $k\mathfrak A_5$-modules in Lemma \ref{GreenCorrA5}, 
and we know also that $k_{\mathfrak A_5}\leftrightarrow \chi_1$, 
$5a\leftrightarrow\chi_5$ and $5b\leftrightarrow\chi_5$ 
(with notation as in Lemma \ref{GreenCorrA5}).
Hence, all the indecomposable trivial source $kG$-modules in $B$
with vertex $P$ are
$$
\{ f^{-1}(W), \ f^{-1}(W_1), \ f^{-1}(W_2) \}
=
\{ 
k\mathfrak N\otimes_{k\mathfrak A_5}k_{\mathfrak A_5}, \
k\mathfrak N\otimes_{k\mathfrak A_5}5a, \
k\mathfrak N\otimes_{k\mathfrak A_5}5b
\}
$$
as sets. Therefore, by making use of 
\cite[Exercise (9.7)]{Thevenaz} again, as above,
all of these three indecomposable $kG$-modules uniquely lift
from $k$ to $\mathcal O$ and the afforded ordinary characters are
all irreducible.

(b) From the proof of (a), $B$ is Puig equivalent to
a block algebra $B'$ which is one of 
$\mathcal OP$, $\mathcal O \mathfrak A_4$ or
$B_0(\mathcal O\mathfrak A_5)$. 
Let $\chi'\in{\mathrm{Irr}}(B')$ and $\phi_u'\in{\mathrm{IBr}}(B_u')$
be the characters corresponding to $\chi$ and $\phi_u$, respectively,
via the Puig equivalence.
We know that on the side of $B'$, the generalized $2$-decomposition
number is one, namely, $d^{u}_{\chi',\phi_u'} = 1$.
We know also that any Puig equivalence preserves generalized
decomposition numbers, see 
\cite[Proposition (43.10)]{Thevenaz}. 
Therefore, we finally get the assertion. 
\end{proof}

\section{Proofs of Theorem~\ref{C2xC2new}, Corollary~\ref{etSuffCond} 
and Theorem~\ref{structureT(G)}}\label{ssec:proofs}

We finally turn to the proofs of the main results.

\begin{proof}[Proof of Theorem~\ref{C2xC2new}] 
(a) This follows from Proposition~\ref{IrrChar}(a). 

(b) Let $V$ and $\chi$ be as in Proposition~\ref{IrrChar}.
Take any element $u\in P$ with $u\not= 1$, and let
$B_u^{(1)}, \cdots, B_u^{(n)}$ be all the blocks of $C_G(u)$ 
with $(B_u^{(i)})^G = B$ for $1\leq i\leq n$.
Then, these blocks are nilpotent
since $C_G(u)$ is $2$-nilpotent, so let $\phi_u^{(i)}$ be
the unique irreducible Brauer character in $B_u^{(i)}$
for each $i$. Then, by Proposition~\ref{IrrChar}(b),
$d^u_{\chi, \phi_u^{(i)}} \ = \ 1$ for any $i$.
Hence, by Brauer's 2nd Main Theorem
$$
\chi(u) \ = \ \sum_{i=1}^n d^u_{\chi,\phi_u^{(i)}}{\cdot}\phi_u^{(i)}(1) 
\ = \ \sum_{i=1}^n \phi_u^{(i)}(1).
$$
This shows that $\chi (u)=1$ if and only if $n=1$ and
$\phi_u^{(1)}(1)=1$.
Now, notice that the assumption and Lemma \ref{princ_type} imply that
there is exactly one block of $C_G(P)$ whose block induction to $G$
is $B$.
Thus, since $P\cong C_2\times C_2$, 
$B$ is of principal type if and only if 
$n=1$ for any $u \in P$ with $u\not= 1$
(note that $n$ is determined by $u$).

Assume first that $V$ is endo-trivial. 
Note that $V$ is an indecomposable trivial source $kG$-module
and $p{\not|}\, \dim_k(V)$ by Lemma~\ref{lem:basicET}(h).
Hence, by Theorem~\ref{prop:torchar}, 
$\chi(u)=1$ for any $u\in P$ with $u\not= 1$.
Therefore the above argument implies that $B$ is of principal type
and $\phi_u^{(1)}(1)=1$ for any $u\in P$ with $u\not= 1$.

Conversely, suppose that the two conditions hold.
Then, again by the above argument, 
$\chi(u)=\phi_u(1)=1$ for any $u\in P$ with $u\not= 1$.
Therefore, by Theorem~\ref{prop:torchar}, $V$ is endo-trivial.

(c) Let $V$ be as in the proof of (b). By (b),
the fact that $V$ is endo-trivial depends only on the block
to which $V$ belongs. Hence the assertion holds.  
\end{proof}

\begin{proof}[Proof of Corollary \ref{etSuffCond}]
(a) Set $V:=f^{-1}(1a)$, and let $B$ be a block of $G$ to which
$V$ belongs. By the assumption there is a one-dimensional
$kG$-module $V'$ in $B$. Clearly $V'$ is endo-trivial
by Lemma \ref{lem:basicET}(c). 
Since $f(V') = {\mathrm{Res}}^G_N(V')$ and this is of
dimension one, set $1b:=f(V')$. Obviously, $V'=f^{-1}(1b)$.
Since Theorem \ref{C2xC2new}(b) holds for $f^{-1}(1b)$,
Theorem \ref{C2xC2new}(c) yields that $V$ is endo-trivial.
The latter part is easy.

(b) Set $V := f^{-1}(1a)$, and let $B$ be a block of $G$ to which
$V$ belongs. Let $B_u$ be the same as in Theorem \ref{C2xC2new}.
Then, $B$ has full defect, and hence so dose $B_u$.
Since $C_G(u)$ is $2$-nilpotent, it follows from Morita's theorem
\cite[Lemma 2]{Koshitani1982} that
$$
kB_u \cong {\mathrm{Mat}}_{\theta(1)}(kP)
\qquad \text{as }k{\text{-algebras}}
$$
for some $\theta\in{\mathrm{Irr}}(O_{2'}(C_G(u)))$.
Hence, the assumption implies that $\theta(1) = 1$,
so that $kB_u \cong kP$ as $k$-algebras, which means
$\phi_u(1) = 1$. Thus, Theorem \ref{C2xC2new}(b) yields the
assertion.
\end{proof}

We finally turn to the structure of the group $T(G)$ of 
endo-trivial modules and prove Theorem~\ref{structureT(G)}.
We start by showing that when 
$|N_{G}(P):C_{G}(P)|=3$ the torsion subgroup 
$TT(G)$ is neither trivial, nor equal to $X(G)$ .

\begin{proof}[Proof of Theorem~\ref{structureT(G)}] 
As before set $N:=N_{G}(P)$ amd $\bar{N}:=N/O_{2'}(N)$.
(a) We have $\bar{N}\cong \fA_{4}$ by assumption that 
$|N:C_{G}(P)|=3$,  and by Lemma~\ref{lem:TGKlein}, 
$$
T(\fA_{4})=X(\fA_{4})\oplus 
\left<[\Omega(k)]\right>\cong \IZ/3\IZ\oplus \IZ\,.
$$
Now both the inflation map 
$\Inf_{\bar{N}}^{N}:T(\bar{N})\lra T(N): 
[M]\mapsto [\Inf_{\bar{N}}^{N}(M)]$  
and  the restriction map $\Res^{G}_{N}: T(G)\lra T(N)$ are injective 
group homomorphisms (see Lemma~\ref{lem:basicET}(e)). 
Moreover, as 
$X(\fA_{4})=\{k_{\fA_{4}},k_{\omega},k_{\bar{\omega}}\}$ 
consists of modules all lying in the principal $2$-block of 
$\fA_{4}$, the three modules in  
$f^{-1}(\Inf_{\bar{N}}^{N}(X(\fA_{4})))\cong\IZ/3\IZ$ 
lie in the principal $2$-block of $G$ and they are 
endo-trivial by Corollary~\ref{etSuffCond}.
Thus it follows from Lemma~\ref{lem:TGKlein}(b) 
that \smallskip $\IZ/3\IZ\oplus\IZ\leq T(G)$.

(b) Since $P$ is self-centralizing  and $|N:C_{G}(P)|=3$, 
we have $N_{G}(P)\cong \fA_{4}$. Thus the 
claim is a direct consequence of (a).

(c) This is a direct consequence of Corollary~\ref{etSuffCond}.

(d) By Lemma~\ref{lem:TGKlein}(b), 
we have $T(G)=TT(G)\oplus \IZ$, where $TT(G)=f^{-1}(X(N))\cap T(G)$.
  Clearly $X(G)\cong(G^{ab})_{2'}=C_{n}\times H^{ab}$ 
is a subgroup of $TT(G)$ by Lemma~\ref{lem:basicET}(c).
  Moreover $N\cong (\fA_{4}\rtimes C_{n})\times H$, so that 
$X(N)\cong (N^{ab})_{2'}\cong 
(\fA_{4}\rtimes C_{n})^{ab}\times H^{ab}
\cong (\fA_{4})^{ab}\times C_{n}\times H^{ab}$. 
But by (a) and its proof, 
$f^{-1}((\fA_{4})^{ab})\cong\IZ/3\IZ$ 
is a subgroup of $TT(G)$. This shows that  
$TT(G)=f^{-1}(X(N))\cong \IZ/3\IZ\oplus\IZ/n\IZ\oplus H^{ab}$.
\end{proof}

\begin{rem}
An alternative proof for Corollary~\ref{etSuffCond}(a) 
can be obtained via the position of endo-trivial modules 
in the stable Auslander-Reiten quiver $\Gamma_{s}(kG)$ 
of $kG$ as follows.\par
Let us first look at  the principal block of $G$.   
By Webb's Theorem \cite[Theorem D]{Webb1982}, 
the component $\Theta(k_{G})$ of the trivial module 
in $\Gamma_{s}(kG)$ is isomorphic to the component 
$\Theta(k_{\fA_{4}})$ of the trivial $k\fA_{4}$-module 
in $\Gamma_{s}(k\fA_{4})$ via inflation from $\bar{N}$ 
to $N$ followed by the Green correspondence $f$
with respect to $(G,P,N)$. 
But we see from 
a theorem of Webb and Okuyama 
(\cite[\S4.17]{Benson1998}, \cite{Webb1982}, \cite{Okuyama1987}) 
that the three one-dimensional $k\fA_{4}$-modules 
$k, k_{\omega}, k_{\overline{\omega}}$ all belong to
$\Theta(k_{\fA_{4}})\cong \IZ \widetilde{A}_{5}$.
Furthermore by a result of Bessenrodt 
\cite[Theorem 2.6]{Bessenrodt1991a}, 
all modules in $\Theta(k_{G})$ are 
endo-trivial because $k_{G}$ is. 
In consequence the Green correspondents 
$f^{-1}(\Inf_{\bar{N}}^{N}(k_{\omega}))$ and 
$f^{-1}(\Inf_{\bar{N}}^{N}(k_{\omega}))$ 
are endo-trivial as well, as required.

Similarly if $kB$ is a non-principal block 
of $kG$ containing a one-dimensional $kG$-module 
$1\mathfrak b$, which is endo-trivial by Lemma~\ref{lem:basicET}(c), 
then again by \cite[Theorem 2.6]{Bessenrodt1991a}, 
all modules in the component $\Theta(1\mathfrak b)$ of 
$1\mathfrak b$ in $\Gamma_{s}(kG)$ are endo-trivial.
 Moreover  \cite[Theorem 2.6]{Bessenrodt1991a} 
also says that $\Theta(1\mathfrak b)\cong \IZ \widetilde{A}_{5}$. 
But from the structure of a 
$\IZ \widetilde{A}_{5}$-component there exists two 
non-isomorphic indecomposable  $kB$-modules 
$M\ncong 1\mathfrak b \ncong N$ such that any module lying 
in $\Theta(1\mathfrak b)$ is of the form $\Omega^{r}(1\mathfrak b)$, 
or $\Omega^{m}(M)$, or 
$\Omega^{n}(N)$ for some $r$, $m$,$n\in\IZ$.
 This and Lemma~\ref{lem:TGKlein}(b) show that 
$kB$ contains exactly three torsion endo-trivial, 
that is $kG$-modules of the form 
$1\mathfrak b =f^{-1}(1a),f^{-1}(1{a'}), f^{-1}(1{a''})$ 
for three non-isomorphic one-dimensional $kN$-modules
$1a$, $1{a'}$, $1{a''}$ in the Brauer correspondent 
$b$ of the block $B$.
\end{rem}

\noindent
{\bf Acknowledgements.}
{\small
For this research the first author was partially
supported by the Japan Society for Promotion of Science (JSPS),
Grant-in-Aid for Scientific Research (C)23540007, 2011--2014. 
The second author gratefully acknowledges financial support by ERC
Advanced Grant 291512 and SNF Fellowship 
for Prospective Researchers 
PBELP2$_{-}$143516.
This research started when the second author was visiting Chiba
University March 2014, which 
was supported by the Japan Society for Promotion of Science (JSPS),
Grant-in-Aid for JSPS-Fellows 24.2274, via Dr.~Moeko Takahashi, Chiba
University, whom both authors are grateful to.
The authors are sincerely grateful to Erwan Biland for enlightening discussions.
}

\end{document}